\newtheorem{theorem}{Theorem}[section] 
\newtheorem{lemma}[theorem]{Lemma}     
\newtheorem{proposition}[theorem]{Proposition}
\newcommand{\abs}[1]{\vert#1\vert}
\newcommand{\roi}{\mathcal{O}}
\newcommand{\res}[1]{\overline{#1}}
\newcommand{\Char}[1]{\mbox{char}_{#1}}
\newcommand{\al}{\alpha}
\newcommand{\mult}[1]{#1^{\times}}
\newcommand{\comment}[1]{}
\newcommand{\sub}[1]{{\mbox{{\scriptsize {#1}}}}}
\renewcommand{\cal}[1]{\mathcal{#1}}
\renewcommand{\frak}[1]{\mathfrak{#1}}
\newcommand{\bb}[1]{\mathbb{#1}}
\title[Euler characteristics, Fubini's theorem]{Euler characteristics, Fubini's theorem, and the Riemann-Hurwitz formula}
\author{Matthew Morrow}
\begin{document}

\maketitle

\begin{abstract}
We relate Fubini's theorem for Euler characteristics to Riemann-Hurwtiz formulae, and reprove a classical result of Iversen. The techniques used include algebraic geometry, complex geometry, and model theory. Possible applications to the study of wild ramification in finite characteristic are discussed.
\end{abstract}

\section*{Introduction}
The first section of the paper reviews the concept of an Euler characteristic for a first order structure in model theory. The discussion is purely algebraic for the benefit of readers unfamiliar with model theory, and various examples are given.

Once an Euler characteristic is interpreted as an integral, it is natural to ask whether Fubini's theorem holds; that is, whether the order of integration can be interchanged in a repeated integral. In the second section we consider finite morphisms between smooth curves over any algebraically closed field, and show that Fubini's theorem is almost equivalent to the Riemann-Hurwitz formula. More precisely, in characteristic zero the two are equivalent and so Fubini's theorem is satisfied, whereas in finite characteristic the possible presence of wild ramification implies that, for any Euler characteristic, interchanging the order of integration is not always permitted.

The third section discusses a notion weaker than the full Fubini property (a so-called strong Euler characteristic \cite{Krajicek} \cite{Krajicek_Scanlon}), but which is sufficent for our applications. We show that over an algebraically closed field of characteristic zero, there is exactly one strong Euler characteristic (over the complex numbers, this is the usual topological Euler characteristic).

We return to finite morphisms between algebraic varieties, this time considering surfaces. Again, Fubini's theorem is related to a Riemann-Hurwitz formula, originally due to Iversen \cite{Iversen}. Our methods provide a new proof of his result.

The paper finishes with a discussion of the geometric approach to ramification theory of local fields.

\subsection*{Acknowledgments}
The conference 'Motivic Integration and its Interactions with Model Theory and Non-Archimedean Geometry' at the ICMS, Edinburgh, during May 2008 encouraged me to think about these ideas. Part of this text was written while visiting the IHES, and I am grateful for the excellent working environment which this provided. This visit would not have been possible without the generosity of the Cecil King Foundation and the London Mathematical Society, in the form of the Cecil King Travel Scholarship.

I thank my supervisor I. Fesenko for his constant encouragement.

\section{Structures and Euler characteristics}
First we present some elementary objects from model theory from a perspective suitable for this work. We must understand what sort of sets we can measure and what it means to measure them. This material is well-known but hopefully this explicit exposition will appeal to those unfamiliar with the theory.

\subsection{Structures on a field}
Given a set $\Omega$, a {\em ring} of subsets of $\Omega$ is defined to be a non-empty collection of subsets $\cal{R}$ of $\Omega$ such that \[A,B\in\Omega\Rightarrow A\setminus B,\,A\cup B,\,A\cap B\in\Omega.\] It is enough to assume that $\cal{R}$ is closed under differences and unions for this implies it is closed under intersections. A ring of sets is said to be an {\em algebra} if and only if it contains $\Omega$.

Following van den Dries \cite{vandenDries} we define a {\em structure} $\cal{A}=(\cal{A}(\Omega^n))_{n=0}^{\infty}$ on $\Omega$ to be an algebra $\cal{A}(\Omega^n)$ of sets on $\Omega_n$ for each $n\ge0$ such that
\begin{enumerate}
\item if $A\in\cal{A}(\Omega^n)$ then $A\times\Omega,\Omega\times A\in\cal{A}(\Omega^{n+1})$;
\item $\{(x_1,\dots,x_n)\in\Omega^n:x_1=x_n\}\in\cal{A}(\Omega^n)$;
\item if $\pi:\Omega^{n+1}\to\Omega^n$ is the projection map to the first $n$ coordinates, then $A\in\cal{A}(\Omega^{n+1})$ implies $\pi(A)\in\cal{A}(\Omega^n)$.
\end{enumerate}
Given a structure, one refers to the sets in $\cal{A}(\Omega^n)$ as being the {\em definable} subsets of $\Omega^n$. If $A\subseteq\Omega^n$ and $f: A\to\Omega^m$ then $f$ is said to be definable if and only if its graph belongs to $\cal{A}(\Omega^{n+m})$.

\begin{proposition}
Let $\cal{A}$ be a structure on a set $\Omega$. Then
\begin{enumerate}
\item if $A\in\cal{A}(\Omega^n),B\in\cal{A}(\Omega^m)$ then $A\times B\in\cal{A}(\Omega^{n+m})$;
\item if $1\le i<j\le n$, then $\{(x_1,\dots,x_n)\in\Omega^n:x_i=x_j\}$ is in $\cal{A}(\Omega^n)$;
\item if $\sigma$ is a permutation of $\{1,\dots,n\}$, then the function $\Omega^n\to\Omega^n$ given by permuting the indices of the coordinates by $\sigma$ is definable.
\end{enumerate}
Moreover, if $A\subseteq\Omega^n$ and $f:A\mapsto\Omega^m$ is definable, then
\begin{enumerate}
\item $A$ is definable;
\item if $B\subseteq A$ is definable, then $f(B)$ is definable, and the function given by restricting $f$ to $B$ is definable;
\item if $B\in\cal{A}(\Omega^m)$, then $f^{-1}(B)\in\cal{A}(\Omega^n)$;
\item if $f$ is injective, then its inverse is definable;
\item if $B\supseteq f(A)$ and $g:B\to\Omega^l$ is definable, then $g\circ f:A\to\Omega^l$ is definable.
\end{enumerate}
\end{proposition}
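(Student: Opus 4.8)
The plan is to isolate two ``workhorse'' facts — that products of definable sets are definable (which is exactly statement (i) of the first list) and that the diagonal sets $\set{x\in\Omega^n:x_i=x_j}$ are definable (statement (ii)) — together with one structural lemma: definability is preserved under projection onto an arbitrary subset of the coordinates. Granting these, each remaining assertion is obtained mechanically by writing the relevant set, graph, or preimage as a finite intersection of products with powers of $\Omega$ and diagonal sets, and then applying a suitable coordinate projection; since every $\cal{A}(\Omega^k)$ is an algebra, these operations never leave the structure.

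I would first dispose of the first list. For (i): if $A\in\cal{A}(\Omega^n)$ and $B\in\cal{A}(\Omega^m)$, then iterating axiom (1) puts $A\times\Omega^m$ and $\Omega^n\times B$ in $\cal{A}(\Omega^{n+m})$, and $A\times B$ is their intersection. For (ii), with $1\le i<j\le n$: axiom (2) in $\Omega^{j-i+1}$ gives $\set{(w_1,\dots,w_{j-i+1}):w_1=w_{j-i+1}}\in\cal{A}(\Omega^{j-i+1})$, and prepending $i-1$ copies of $\Omega$ and appending $n-j$ copies, via axiom (1), turns this into $\set{x\in\Omega^n:x_i=x_j}$. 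For (iii): the graph in $\Omega^{2n}$ of the permutation map attached to $\sigma$ is the finite intersection $\bigcap_{k=1}^{n}\set{(x,y)\in\Omega^{2n}:y_k=x_{\sigma(k)}}$ of sets of the kind just built, hence definable, so the permutation map is definable.

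The structural lemma reads: if $S\in\cal{A}(\Omega^k)$ and $i_1<\dots<i_r$ lie in $\set{1,\dots,k}$, then $\set{(x_{i_1},\dots,x_{i_r}):x\in S}\in\cal{A}(\Omega^r)$. To prove it, form $\Omega^r\times S\in\cal{A}(\Omega^{r+k})$, intersect with the diagonal sets $\set{z\in\Omega^{r+k}:z_t=z_{r+i_t}}$ for $t=1,\dots,r$, and project off the last $k$ coordinates by $k$ applications of axiom (3); the resulting set in $\cal{A}(\Omega^r)$ is exactly the claimed image. Now the ``moreover'' part follows. For its (i), $A$ is the projection of $\Gamma_f$ onto the first $n$ coordinates. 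For (ii), $f|_B$ has graph $\Gamma_f\cap(B\times\Omega^m)$, definable by (i) of the first list, and $f(B)$ is the projection of that graph onto the last $m$ coordinates. For (iii), $\Gamma_f\cap(\Omega^n\times B)$ is the graph of $f$ restricted to $f^{-1}(B)$, and its projection onto the first $n$ coordinates is $f^{-1}(B)$. For (iv), when $f$ is injective the graph of $f^{-1}$ is the image of $\Gamma_f$ under the coordinate swap $(x,y)\mapsto(y,x)$ of $\Omega^{n+m}$, which is definable by (iii) of the first list, so part (ii) of the ``moreover'' list applies and $f^{-1}$ is definable. For (v), the set $(\Gamma_f\times\Omega^l)\cap(\Omega^n\times\Gamma_g)\subseteq\Omega^{n+m+l}$ equals $\set{(a,f(a),g(f(a))):a\in A}$ because $f(A)\subseteq B$, and projecting off the middle $m$ coordinates gives the graph of $g\circ f$, so $g\circ f$ is definable.

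I do not expect a real obstacle: once the projection lemma is available the whole argument is formal. The one point needing genuine (if slight) care is that axiom (3) permits projection only onto an initial segment of the coordinates, whereas nearly every step above wants to project onto, or delete, some interior block; the dummy-variable device inside the projection lemma is precisely what removes this restriction, and it is the only step that is not pure bookkeeping.
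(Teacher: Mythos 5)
Your proof is correct: the one genuinely non-trivial point is that axiom (3) only projects onto initial segments, and your dummy-variable lemma (form $\Omega^r\times S$, cut with diagonals, project off the tail) handles it cleanly, after which every item of both lists follows by the product/diagonal/projection bookkeeping you describe. The paper gives no argument of its own --- it declares the claims straightforward and defers to van den Dries --- and your verification is precisely the standard one that reference supplies, so there is nothing to compare beyond noting that you have filled in the omitted routine details correctly.
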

\begin{proof}
These are straightforward to check; proofs may be found in \cite{vandenDries}.
\end{proof}

\begin{remark}
If $\cal{L}$ is a first order language of logic, and $\Omega$ is an $L$-structure, then there is a structure on $\Omega$ in which $\cal{A}(\Omega^n)$ consists of precisely those sets of the form \[\{x\in\Omega^n:\Omega\models\phi(x,b)\}\] where $\phi(x,y)$ is formula of $L$ in variables $x_1,\dots,x_n,y_1,\dots,y_m$ and $b\in\Omega^m$; that is, those sets which are definable with parameters in the sense of model theory.

Realistically, any structure in which we will be interested will arise in this way as the parameter-definable sets of some language. But for the reader less familiar with logic, the axiomatic approach above is more immediately appealing, though ultimately less satisfying.
\end{remark}

\begin{example}\label{example_structures} We present some examples to explain what we can and cannot study using structures. All are well-known.
\begin{enumerate}
\item If $\Omega$ is an arbitrary set, we may take $\cal{A}(\Omega^n)$ to be collection of all subsets of $\Omega^n$; that is, every set is definable.
\item If $k$ is an algebraically closed field, let $\cal{A}(k^n)$ be the ring of sets generated by the Zariski closed subsets of $k^n$; such sets are called constructible. It is known that $(\cal{A}(k^n))_n$ forms a structure on $k$. The difficulty is establishing that such sets are closed under projection; this may either be proved in a model theoretic setting, where it is equivalent to establishing that the theory of algebraically closed fields admits quantifier elimination, or it may be seen as a special case of a result of algebraic geometry concerning constructible subsets of Noetherian schemes (see e.g. \cite{Hartshorne} exercises 3.17-3.19).
\item If $k$ is an arbitrary field, recall that an affine subset of $k^n$ is a set of the form $a+X$ where $a\in k^n$ and $X$ is a $k$-subspace of $k^n$. Letting $\cal{A}(k^n)$ be the ring of sets generated by affine subsets of $k^n$ gives a structure on $k$.
\item If $\bb{R}$ is the real line, then let $\cal{A}(\bb{R}^n)$ be the ring of sets generated by $\{x\in\bb{R}^n: p(x)\ge 0\}$ for $p\in\bb{R}[X_1,\dots,X_n]$; the sets in $\cal{A}(\bb{R}^n)$ are called semi-algebraic subsets of $\mathbb{R}^n$. This gives a structure for $\bb{R}$. Again, the difficulty is verifying that such sets are closed under projection.
\item None of the following give structures on the real line: the Borel sets, the Lebesgue measurable sets, the Souslin sets.
\end{enumerate}
So structures are typically quite coarse from the point of view of classical analysis and measure theory.
\end{example}

\subsection{Euler characteristics and the Grothendieck ring of a structure}
Having introduced the sets of interest we now discuss what it means to take the measure of such a set.

\begin{definition}
Let $\Omega$ be a set with a structure $\cal{A}$. An {\em Euler characteristic} is a map $\chi$ from the definable sets to some commutative ring $R$, i.e. \[\chi:\bigsqcup_{n=0}^{\infty}\cal{A}(\Omega^n)\to R,\] which satisfies
\begin{enumerate}
\item if $A,B\in\cal{A}(\Omega^n)$ are disjoint, then $\chi(A\sqcup B)=\chi(A)+\chi(B)$;
\item if $A\in\cal{A}(\Omega^n)$, $B\in\cal{A}(\Omega^m)$, then $\chi(A\times B)=\chi(A)\chi(B)$;
\item if $A\in\cal{A}(\Omega^n)$, $B\in\cal{A}(\Omega^m)$ and there is a definable bijection $f:A\to B$, then $\chi(A)=\chi(B)$.
\end{enumerate}
\end{definition}

\begin{remark}
From the additivity of $\chi$, one might think that an Euler characteristic is similar to a measure in the classical sense. The vast difference between the two is the invariance of $\chi$ under definable bijections. For example, if $\Omega$ is a field $k$, $\al\in\mult{k}$, and multiplication by $\al$ is a definable map from $k$ to itself, then $\chi(A)=\chi(\al A)$ for definable $A\subseteq k$; in other words, scaling a set does not affect its size. Or if $\Omega$ is the real line and $x\mapsto x^2$ is definable, then for any definable $A$ of the positive reals, $\chi(\{x^2:x\in A\})=\chi(A)$.

Some authors prefer the term {\em generalised Euler characteristic} or {\em additive invariant}, to avoid possible confusion with the {\em topological Euler characteristic} $\chi_\sub{top}$ for complex projective manifolds, defined as the alternating sum of the Betti numbers.
\end{remark}

\begin{example} The easiest example of an Euler characteristic is counting measure: let $\Omega$ be a finite set, $\cal{A}(\Omega^n)$ the algebra of all subsets of $\Omega^n$, and set $\chi(A)=\abs{A}$ to define a $\bb{Z}$-valued Euler characteristic.

Explicitly exhibiting more interesting Euler characteristics requires some work, so we present here without proof some known examples using the structures of example \ref{example_structures}.
\begin{enumerate}
\item Let $k$ be a field, equipped with the structure generated by the affine subsets. If $k$ is infinite then there is a unique $\bb{Z}[t]$-valued Euler characteristic $\chi$ which satisfies \[\chi(a+X)=t^{\dim_k X}\] where $a\in k^n$ and $X$ is a $k$-subspace of $k^n$.
\item Give $\bb{R}$ the structure of semi-algebraic sets. Then there is a unique $\bb{Z}$-valued Euler characteristic $\chi$ which satisfies \[\chi((0,1))=-1.\]
\item Give $\bb{C}$ the structure of constructible sets; then there is a unique Euler characteristic $\chi_\sub{top}$ which agrees with the topological Euler characteristic for any projective manifold.
\end{enumerate}
\end{example}

\begin{definition}
Let $\Omega$ be a set with structure $\cal{A}$. The associated {\em Grothendieck ring}, denoted $K_0(\Omega)$ (though it does of course depend on the structure, not just the set $\Omega$), is defined to be the free commutative unital ring generated by symbols $[A]$ for $A$ a definable subset of $\Omega^n$, any $n\ge 0$, modulo the following relations
\begin{enumerate}
\item if $A,B\in\cal{A}(\Omega^n)$ are disjoint, then $[A\sqcup B]=[A]+[B]$;
\item if $A\in\cal{A}(\Omega^n)$, $B\in\cal{A}(\Omega^m)$, then $[A\times B]=[A][B]$;
\item if $A\in\cal{A}(\Omega^n)$, $B\in\cal{A}(\Omega^m)$ and there is a definable bijection $f:A\to B$, then $[A]=[B]$.
\end{enumerate}
\end{definition}

\begin{remark}
The map $A\mapsto [A]$ defines a $K_0(\Omega)$-valued Euler characteristic on $\Omega$, which is universal in the sense that if $\chi:\bigsqcup_{n=0}^{\infty}\cal{A}_n\to R$ is an Euler characteristic, then there is a unique ring homomorphism $\chi':K_0(\Omega)\to R$ such that $\chi(A)=\chi'([A])$ for any definable $A$. Thus $A\mapsto[A]$ is the most general Euler characteristic of a structure.

Note that if $\{x\}\subseteq\Omega^n$ is a single point, and $A\subseteq\Omega^m$ is definable, then projection induces a definable isomorphism $\{x\}\times A\to A$. So $[\{x\}][A]=[A]$ for all definable $A$ and therefore $[\{x\}]$=1; more generally, $[B]=\abs{B}$ for any finite definable set $B$.
\end{remark}

\begin{remark}\label{remark_Euler_characteristic_of_varieties}
{\em Extending the Euler characteristic to varieties.} Assume that $\Omega=k$ is an algebraically-closed field with the structure $\cal{A}$ of constructible subsets. Let $V$ be a separated algebraic variety over $k$ - our varieties shall usually consist only of the closed points of the corresponding scheme -  and let $\cal{A}(V)$ be the ring generated by the Zariski closed subsets of $V$, i.e. the algebra of constructible subsets of $V$.

It is straightforward to prove that $\chi$ uniquely extends to $\cal{A}(V)$ in such a way that if $U\subseteq V$ is an affine open or closed subset, $C\subseteq U$ is constructible, and $i:U\to\bb{A}_k^d$ is an open or closed embedding for some $d$, then $\chi(C)=\chi(i(C))$.
\end{remark}

\begin{remark}\label{remark_measure_to_integral}
{\em Extending the measure to an integral.} If $\Omega$ is a set equipped with a structure and Euler characteristic $\chi$, then there is a unique $R$-linear map $\int\,d\chi$from the space of functions spanned by characteristic functions of definable sets to $R$ which satisfies $\int\Char{A}\,d\chi=\chi(A)$ for any definable $A$. We will allow ourselves to use typical notation for integrals, writing $\int f(x)\,d\chi(x)$.
\end{remark}

\section{Riemann-Hurwitz and Fubini's theorem for curves}
Here we relate Fubini's theorem for Euler characteristics to the Riemann-Hurwitz formula for morphisms between curves; then we produce a startling result implying that in finite characteristic it is always possible for Fubini's theorem to fail.

Throughout this section $k$ is an algebraically closed field of arbitrary characteristic, $\cal{A}$ is the structure of constructible sets, and $\chi$ is a fixed $R$-valued Euler characteristic on $\cal{A}$. By a curve $C$ over $k$, in this section, we mean a smooth, one-dimensional, irreducible algebraic variety over $k$; we only consider the closed points of $C$. Following remarks \ref{remark_Euler_characteristic_of_varieties} and \ref{remark_measure_to_integral} the space of integrable functions on $C$ is the $R$-module generate by characteristic functions of constructible sets; the integral on this space will be denoted $\int_C\cdot\,d\chi$.

Let $\phi:C_1\to C_2$ be a non-constant morphism of curves. We will study whether Fubini's theorem holds for the morphism $\phi$, which is to say that for each $y\in C_2$, the fibre $\phi^{-1}(y)$ is constructible, that $y\mapsto\chi(\phi^{-1}(y))$ is integrable, and finally that $\chi(C_1)=\int_{C_2}\chi(\phi^{-1}(y))\,d\chi(y)$. The problem immediately simplifies:

\begin{lemma}\label{lemma_Fubini_formula_for_curves}
Fubini's theorem holds for a separable morphism $\phi:C_1\to C_2$ of projective curves if and only if the following formula relating the Euler characteristics of $C_1$ and $C_2$ is satisfied: \[\chi(C_1)=\chi(C_2)\deg\phi-\sum_{x\in C_1}(e_x(\phi)-1),\] where $e_x(\phi)$ is the ramification degree of $\phi$ at $x$.
\end{lemma}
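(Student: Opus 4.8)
The plan is to compute the integral $\int_{C_2}\chi(\phi^{-1}(y))\,d\chi(y)$ outright and show it always equals $\chi(C_2)\deg\phi-\sum_{x\in C_1}(e_x(\phi)-1)$, while checking that the two ``regularity'' clauses in the definition of Fubini's theorem (constructibility of fibres, integrability of $y\mapsto\chi(\phi^{-1}(y))$) hold automatically for a separable $\phi$. Once this is done, the clause $\chi(C_1)=\int_{C_2}\chi(\phi^{-1}(y))\,d\chi(y)$ is equivalent to the displayed formula by substitution, and the lemma follows in both directions at once.

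\textbf{Reducing the integrand.} For each $y\in C_2$ the fibre $\phi^{-1}(y)$ is a zero-dimensional closed subset of $C_1$, hence a finite set of closed points, hence constructible; and by the remarks of Section~1 ($[B]=\abs{B}$ for finite definable $B$, $\chi(\{\text{pt}\})=1$) we get $\chi(\phi^{-1}(y))=\abs{\phi^{-1}(y)}$. Since $\phi$ is a finite dominant morphism between smooth curves it is flat, and as $k$ is algebraically closed all residue fields at closed points are $k$; comparing $k$-lengths of scheme-theoretic fibres then gives $\deg\phi=\sum_{x\in\phi^{-1}(y)}e_x(\phi)$ for every $y$. Subtracting the number of points in the fibre yields
\[ \abs{\phi^{-1}(y)}=\deg\phi-\sum_{x\in\phi^{-1}(y)}\bigl(e_x(\phi)-1\bigr). \]

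\textbf{Using separability and integrating.} Here separability enters: the ramification locus $R=\{x\in C_1:e_x(\phi)>1\}$ is finite (a standard consequence of separability — $\phi$ is unramified away from the support of the different), so the branch locus $B=\phi(R)$ is finite. Thus $y\mapsto\abs{\phi^{-1}(y)}$ agrees with $\deg\phi$ off the finite set $B$, and is the finite $\mathbb{Z}$-linear combination of characteristic functions
\[ \abs{\phi^{-1}(y)}=\deg\phi\cdot\Char{C_2}(y)-\sum_{y_0\in B}\Bigl(\textstyle\sum_{x\in\phi^{-1}(y_0)}(e_x(\phi)-1)\Bigr)\Char{\{y_0\}}(y), \]
which in particular is integrable. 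Integrating term by term, using $\chi(\{y_0\})=1$ and the fact that $e_x(\phi)-1$ vanishes for $x\notin R$ (so that running $x$ over $\bigsqcup_{y_0\in B}\phi^{-1}(y_0)$ is the same as running $x$ over all of $C_1$), I obtain $\int_{C_2}\chi(\phi^{-1}(y))\,d\chi(y)=\chi(C_2)\deg\phi-\sum_{x\in C_1}(e_x(\phi)-1)$. Comparing with the requirement $\chi(C_1)=\int_{C_2}\chi(\phi^{-1}(y))\,d\chi(y)$ gives the equivalence.

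\textbf{Expected difficulty.} I do not anticipate a genuine obstacle; the argument is bookkeeping once the scheme-theoretic identity $\deg\phi=\sum_x e_x(\phi)$ and the finiteness of the ramification locus of a separable morphism are invoked, both of which are standard. The only points needing care are the passage between the set-theoretic and scheme-theoretic fibres (so that the multiplicities $e_x$ appear correctly) and the normalisation $\chi(\text{point})=1$ recorded in Section~1.
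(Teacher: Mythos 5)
Your proposal is correct and follows essentially the same route as the paper's proof: identify $\chi(\phi^{-1}(y))$ with the fibre cardinality, invoke the identity $\sum_{x\in\phi^{-1}(y)}e_x(\phi)=\deg\phi$ and the finiteness of the ramification locus, and integrate the resulting function which is constant off a finite set. The only difference is cosmetic bookkeeping (you decompose the integrand into characteristic functions, the paper splits the domain of integration as $C_2\setminus\phi(\Sigma)$ and $\phi(\Sigma)$), so there is nothing further to add.
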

\begin{proof}
Let $\Sigma\subseteq C_1$ be the finite set of points at which $\phi$ is ramified. Let $y$ be a point of $C_2$. The fibre $\phi^{-1}(y)$ is finite; moreover, it contains exactly $\deg\phi$ points when $y\notin\phi(\Sigma)$. So each fibre is certainly constructible and $\chi(\phi^{-1}(y))=\abs{\phi^{-1}(y)}$. Thus $y\mapsto\chi(\phi^{-1}(y))$ is constant off the finite set $\phi(\Sigma)$ and hence is integrable on $C_2$; integrating obtains \[\int_{C_2}\chi(\phi^{-1}(y))\,d\chi(y)=\chi(C_2\setminus\phi(\Sigma))\deg\phi+\sum_{y\in\phi(\Sigma)}\abs{\phi^{-1}(y)}.\] The fundamental ramification equality $\sum_{x\in\phi^{-1}(y)}e_x(\phi)=\deg\phi$ transforms this into \[\chi(C_2)\deg\phi-\sum_{y\in\phi(\Sigma)}\sum_{x\in\phi^{-1}(y)}(e_x(\phi)-1),\] which completes the proof.
\end{proof}

\begin{remark}
More generally, if $\mbox{char}\,k=p>0$ and $\phi:C_1\to C_2$ is a morphism of projective curves which is not necessarily separable, then we decompose $\phi$ as $\phi=\phi_{\mbox{{\scriptsize sep}}}\circ F^m$; here $F$ is the Frobenius morphism of $C_1$, $\phi_{\mbox{{\scriptsize sep}}}:C_1\to C_2$ is a separable morphism, and $m$ is a non-negative integer. The previous proof shows that Fubini holds for $\phi$ if and only if \[\chi(C_1)=\chi(C_2)\deg\phi_{\mbox{{\scriptsize sep}}}-\sum_{x\in C_1}(e_x(\phi_{\mbox{{\scriptsize sep}}})-1).\] So Fubini holds for $\phi$ if and only if it holds for the separable part $\phi_{\mbox{{\scriptsize sep}}}$; in particular, Fubini holds for any purely inseparable morphism of projective curves

For this reason we are justified in focusing our attention on separable morphisms.
\end{remark}

\begin{remark}
More usually Fubini's theorem is concerned with measuring subsets of product space via repeated integrals; let us show that this is the same as our current activity considering fibres of morphisms between projective curves.

Suppose $\phi:C_1\to C_2$ is a separable morphism of projective curves over $k$. Then $\phi$ is a finite morphism, so that if $U_2\subseteq C_2$ is a non-empty, affine, open subset then the same is true of $U_1=\phi^{-1}(U_2)$. Choose closed embeddings $U_1\to\bb{A}_k^n$, $U_2\to\bb{A}_k^m$ and let $\Gamma=\{(x,\phi(x))\in\bb{A}_k^{n+m}:\,x\in U_1\}$ be the graph of $\phi_{U_1}$.

It is immediate that the integral $\int_{k^n}\int_{k^m}\Char{\Gamma}(x,y)\,d\chi(y)d\chi(x)$ is well-defined and equal to $\chi(U_1)$. Conversely, if we fix $y\in U_2$ then $\int_{k^n}\Char{\Gamma}(x,y)\,d\chi(x)=\chi(\phi^{-1}(y))$; arguing as in the previous lemma now obtains \[\int_{k^m}\int_{k^n}\Char{\Gamma}(x,y)\,d\chi(x)d\chi(y)=\chi(U_2)\deg\phi-\sum_{x\in U_1}(e_x(\phi)-1).\] So interchanging the order of integration preserves the value of the integral if and only if \[\chi(U_1)=\chi(U_2)\deg\phi-\sum_{x\in U_1}(e_x(\phi)-1).\]

Further, $C_2\setminus U_2$ and $\phi^{-1}(C_2\setminus U_2)=C_1\setminus U_1$ are finite sets and it is straightforward to verify, similarly to the previous lemma, that \[|C_1\setminus U_1|=|C_2\setminus U_2|\deg{\phi}-\sum_{x\in C_1\setminus U_1}(e_x(\phi)-1).\] Taking the sum of the previous two formulae shows that Fubini's theorem holds for $\phi:C_1\to C_2$ if and only if the the repeated integrals of $\Char{\Gamma}$ are equal.
\end{remark}

Recall that the Riemann-Hurwitz formula states that if $\phi:C_1\to C_2$ is a non-constant morphism of projective curves, then there are integers $\tilde{e}_x(\phi)$ for each $x\in C_1$ (which we shall call the {\em Riemann-Hurwitz ramification degrees}) such that $\tilde{e}_x(\phi)\ge e_x(\phi)$, with equality if and only if $\phi$ is tamely ramified at $x$, and such that \[2(1-g_2)=2(1-g_1)\deg\phi-\sum_{x\in C_1}(\tilde{e}_x(\phi)-1),\] where $g_i$ is the genus of $C_i$. It is apparent that Fubini's theorem and the Riemann-Hurwtiz formula are related.

\begin{remark}\label{remark_RH_ramification_degree_is_almost_discriminant}
The non-negative integer $\tilde{e}_x(\phi)-1$ is equal to the different of the extension $\roi_{{C_1},x}/\roi_{{C_2},\phi(x)}$ of discrete valuation rings, though we will not use this fact.
\end{remark}

\begin{remark}
It is useful to have some explicit examples of morphisms between projective curves. Let $f(t)$ be a polynomial over $k$ and let $\Gamma_f$ be the algebraic variety over $k$ which is the graph of $f$, i.e. \[\Gamma_f=\{(x,y)\in\bb{A}_k^2:y=f(x)\}.\] Let $F:\bb{A}_k^1\to\Gamma_f$ be the morphism $F(x)=(x,f(x))$ and let $\pi:\Gamma_f\to\bb{A}_k^1$ be the projection map $\pi(x,y)=y$. Note that $F$ is an isomorphism of algebraic varieties and that $\pi\circ F=f$; here we abuse notation and write $f$ for the morphism $\bb{A}_k^1\to\bb{A}_k^1$ induced by the polynomial $f(t)$. Let $\Gamma_f^*$ denote the projective closure of $\Gamma_f$, obtained by adding a single point at infinity. The morphisms $F,\pi,f$ extend to morphisms $F:\bb{P}_k^1\stackrel{\cong}{\to}\Gamma_f^*$, $\pi:\Gamma_f^*\to\bb{P}_k^1$, $f:\bb{P}_k^1\to\bb{P}_k^1$.

The previous remark implies that the following are all equivalent:
\begin{enumerate}
\item Fubini holds for $f:\bb{P}_k^1\to\bb{P}_k^1$;
\item Fubini holds for $f:\bb{A}_k^1\to\bb{A}_k^1$;
\item The repeated integrals of $\Char{\Gamma_f}$ are equal.
\end{enumerate}
\end{remark}

To make use of the examples afforded by the previous remark we now calculate the ramification degrees:

\begin{lemma}
We retain the notation of the previous remark. The ramification degrees of $f:\bb{P}_k^1\to\bb{P}_k^1$ are \[e_a(f)=\begin{cases}\nu_{t-a}(f(t)-f(a))& a\in k=\bb{A}_k^1\\ \deg f&a=\infty,\end{cases}\]
and the Riemann-Hurwitz ramification degrees are \[\tilde{e}_a(f)=\begin{cases}1+\nu_{t-a}(f'(t))& a\in k=\bb{A}_k^1 \\\deg f+(\deg f-\deg f'-1)&a=\infty.\end{cases}\]
\end{lemma}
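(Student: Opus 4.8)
The plan is to compute each of the four ramification degrees directly from the definitions, using local coordinates on $\mathbb{P}^1_k$. Recall that $e_a(f)$ is, by definition, $\nu_a(f^*(m))$ where $m$ is a uniformiser at $f(a)$ and $\nu_a$ is the valuation on the local ring at $a$; equivalently, if $s$ is a uniformiser at $a$ and $u$ a uniformiser at $f(a)$, then $f^*(u) = (\text{unit})\cdot s^{e_a(f)}$. For the Riemann-Hurwitz degree, I will use remark \ref{remark_RH_ramification_degree_is_almost_discriminant}: $\tilde{e}_a(f)-1$ is the valuation of the different of $\mathcal{O}_{\mathbb{P}^1,a}/\mathcal{O}_{\mathbb{P}^1,f(a)}$. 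For a monogenic extension of complete DVRs generated by a uniformiser $s$ with minimal polynomial (over the base) $g$, the different is generated by $g'(s)$; more usefully here, if $s \mapsto h(s)$ expresses the base uniformiser in terms of $s$, then the different is $\nu_a(h'(s))$ (the derivative of the "relative coordinate"), which is the classical formula. This is the only place model-theoretic input is absent and pure algebra (or a direct power-series computation) is needed.

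\textbf{Finite points.} For $a \in k = \mathbb{A}^1_k$, take $s = t - a$ as uniformiser at $a$ and $u = t - f(a)$ as uniformiser at $f(a)$; then $f^*(u) = f(t) - f(a)$, so immediately $e_a(f) = \nu_{t-a}(f(t)-f(a))$, which is the first case. For the Riemann-Hurwitz degree, the relative coordinate is precisely $f(t) - f(a)$ viewed as a function of $s = t-a$, so its derivative is $f'(t)$, giving $\tilde{e}_a(f) - 1 = \nu_{t-a}(f'(t))$, i.e.\ $\tilde{e}_a(f) = 1 + \nu_{t-a}(f'(t))$. (One should note in passing that in characteristic $p$ these two can genuinely differ, e.g.\ $f(t) = t^p$, consistent with the inequality $\tilde{e}_x \ge e_x$ recorded before the lemma.)

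\textbf{The point at infinity.} Here is the main obstacle, since one must pass to the chart at $\infty$ on both copies of $\mathbb{P}^1_k$. Write $n = \deg f$ and $f(t) = c_n t^n + \dots + c_0$ with $c_n \ne 0$. Put $s = 1/t$ as uniformiser at $\infty$ in the source and $u = 1/w$ as uniformiser at $\infty$ in the target, where $w$ is the coordinate on the target $\mathbb{A}^1_k$. Then $f^*(w) = f(t) = c_n t^n(1 + O(1/t))$, so $f^*(u) = f(t)^{-1} = c_n^{-1} s^n (1 + O(s))$, whence $e_\infty(f) = n = \deg f$. For $\tilde{e}_\infty(f)$, I will compute the derivative of the relative coordinate $f^*(u) = 1/f(t)$ with respect to $s = 1/t$: using $d/ds = -t^2\, d/dt$, one gets $\frac{d}{ds}\bigl(1/f(t)\bigr) = t^2 f'(t)/f(t)^2$. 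Now $f'(t)$ has leading term $n c_n t^{n-1}$ (degree $\deg f' = n-1$ in characteristic zero, but possibly lower in characteristic $p$, which is exactly why $\deg f'$ appears in the stated formula rather than $n-1$), so $t^2 f'(t)/f(t)^2$ has $t$-adic order at $\infty$ equal to $-2 - \deg f' + 2n$, i.e.\ $s$-adic order $2n - \deg f' - 2 = n + (n - \deg f' - 1) - 1$. Hence $\tilde{e}_\infty(f) - 1 = n + (n - \deg f' - 1) - 1$, giving $\tilde{e}_\infty(f) = \deg f + (\deg f - \deg f' - 1)$ as claimed. The routine points to be careful about are that $f'(t)/f(t)$ contributes no unexpected cancellation at $\infty$ (it does not, since numerator and denominator have the stated leading terms and $c_n \ne 0$), and in characteristic $p$ the convention that $\deg f'$ may drop — this is harmless for the formula and in fact is the content of the correction term.
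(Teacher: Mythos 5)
Your proposal is correct and follows essentially the same route as the paper: identify $\tilde{e}_a(f)-1$ with the order of vanishing of the derivative of the pulled-back local coordinate (equivalently the different, as in the paper's remark), then compute with uniformisers $t-a$, $s-b$ at finite points and $t^{-1}$, $s^{-1}$ at infinity. Your chain-rule computation at infinity, giving $\frac{d}{ds}\bigl(1/f(t)\bigr)=t^2f'(t)/f(t)^2$ of $s$-adic order $2\deg f-\deg f'-2$, is in fact spelled out more carefully than the paper's, whose displayed formula at infinity elides the derivative.
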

Here $\nu_{t-a}$ denotes the $t-a$-adic valuation on $k(t)$.
\begin{proof}
The ramification degrees are clear so we only consider the Riemann-Hurwitz degrees.

Write $s=f(t)$ so that $f:\bb{P}_k^1\to\bb{P}_k^1$ corresponds to the inclusion of function fields $K(s)\to K(t)$. A local coordinate $t_a\in K(t)$ at $a\in k$ is $t-a$; a local coordinate $s_b\in K(s)$ at $b=f(a)$ is $s-b$. By definition of the Riemann-Hurwitz ramification degree, \[\tilde{e}_a(f)-1=\nu_{t-a}(\frac{d}{dt_a}{s_b});\] writing $f(t)-b=g(t-a)$ for some polynomial $g$ gives \[\nu_{t-a}(\frac{d}{dt_a}{s_b})=\nu_{t-a}(g'(t-a))=\nu_{t-a}(f'(t)).\]

Secondly, $f(\infty)=\infty$ and local parameters are given by $t^{-1}$, $s^{-1}$; therefore the Riemann-Hurwitz ramification degree at infinity is given by \[\tilde{e}_{\infty}(f)=\nu_{t^{-1}}\left(\frac{1}{f(t)}\right)+1=\deg f+(\deg f-\deg f'-1).\]
\end{proof}

\begin{example}
For any integer $m>1$ not divisible by $\mbox{char}\,k$, let $f(t)=t^m$ in the previous remark. Then $f:\bb{P}_k^1\to\bb{P}_k^1$ is unramified away from $0$ and infinity, with $e_0(f)=e_{\infty}(f)=m$. Thus Fubini's theorem holds for $f$ (or, equivalently, for the set $\Gamma_f\subseteq k\times k$) if and only if $\chi(\bb{P}_k^1)=m\chi(\bb{P}_k^1)-2(m-1)$; that is, if and only if $(\chi(\bb{P}_k^1)-2)(m-1)=0$.

However, now assume $\mbox{char}\,k=p>0$ and set $f(t)=t^p-t$. Then $f:\bb{P}_k^1\to\bb{P}_k^1$ is unramified outside infinity, where it is wildly ramified of degree $p$. Thus Fubini's theorem holds for $f$ (or, equivalently, for the set $\Gamma_f\subseteq k\times k$) if and only if $\chi(\bb{P}_k^1)=p\chi(\bb{P}_k^1)-(p-1)$; that is, if and only if $(\chi(\bb{P}_k^1)-1)(p-1)=0$.

Taking $m=p+1$ in the previous two paragraphs shows that Fubini fails for one of the sets $\Gamma_{X^p-X}$, $\Gamma_{X^p}$ or that $p$ is an idempotent in $R$.
\end{example}

The example shows that Fubini's theorem can fail when in finite characteristic:

\begin{theorem}\label{theorem_Fubini_fails_in_char_p}
Assume $\mbox{char}\,k=p>2$ and that $p\neq 1$ in $R$. Then there exists a subset of $k\times k$ for which Fubini's theorem does not hold.
\end{theorem}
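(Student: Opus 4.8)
The plan is to combine the two explicit computations from the preceding example into a single argument. From the example with $f(t)=t^p$, Fubini for the set $\Gamma_{X^p}\subseteq k\times k$ is equivalent to $(\chi(\bb{P}_k^1)-2)(p-1)=0$, and from the example with $f(t)=t^p-t$, Fubini for the set $\Gamma_{X^p-X}\subseteq k\times k$ is equivalent to $(\chi(\bb{P}_k^1)-1)(p-1)=0$. I would first observe that since $p>2$ we have $p-1>1$; if $p-1$ were a zero divisor killing both $\chi(\bb{P}_k^1)-2$ and $\chi(\bb{P}_k^1)-1$, then it would kill their difference, which is $1$. Hence $p-1=0$ in $R$, i.e.\ $p=1$ in $R$, contradicting the hypothesis. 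Therefore at least one of the two equalities $(\chi(\bb{P}_k^1)-2)(p-1)=0$ or $(\chi(\bb{P}_k^1)-1)(p-1)=0$ fails in $R$.

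Concretely, I would argue as follows. Suppose, for contradiction, that Fubini's theorem holds for every subset of $k\times k$. In particular it holds for $\Gamma_{X^p}$ and for $\Gamma_{X^p-X}$, so both $(\chi(\bb{P}_k^1)-2)(p-1)=0$ and $(\chi(\bb{P}_k^1)-1)(p-1)=0$ hold in $R$. Subtracting the first from the second gives $(p-1)\cdot 1 = p-1 = 0$ in $R$, so $p=1$ in $R$, contradicting the assumption $p\neq 1$ in $R$. This contradiction shows that Fubini fails for at least one of $\Gamma_{X^p}$, $\Gamma_{X^p-X}$, which are genuine subsets of $k\times k$.

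There is essentially no obstacle here: the hard work — relating Fubini to the Riemann--Hurwitz-type identity (Lemma \ref{lemma_Fubini_formula_for_curves} and the following remarks) and carrying out the ramification computations for $t^p$ and $t^p-t$ — has already been done in the example. The only point needing care is making sure the subtraction is legitimate in an arbitrary commutative ring $R$, which it is, since both relations are honest equalities of elements of $R$; no cancellation or invertibility is used. One could also phrase the conclusion slightly more sharply: the argument shows that Fubini fails for $\Gamma_{X^p}$ unless $\chi(\bb{P}_k^1)=2$ in $R$, and fails for $\Gamma_{X^p-X}$ unless $\chi(\bb{P}_k^1)=1$ in $R$, and these two conditions are mutually exclusive precisely because their difference is the unit $1$ while $p-1\neq 0$. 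This is exactly the content of the last sentence of the preceding example, so the theorem is really just a restatement of that observation, and the proof can be kept to a few lines.
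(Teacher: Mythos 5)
There is a genuine gap: your first input, ``Fubini for $\Gamma_{X^p}$ is equivalent to $(\chi(\bb{P}_k^1)-2)(p-1)=0$,'' is false. The computation in the example for $f(t)=t^m$ explicitly requires $m$ \emph{not} divisible by $\mbox{char}\,k$, and $m=p$ violates this. In characteristic $p$ the map $t\mapsto t^p$ is the Frobenius: it is purely inseparable, every fibre consists of exactly one point, and by the remark following Lemma \ref{lemma_Fubini_formula_for_curves} Fubini's theorem holds for it automatically (its separable part is the identity). So $\Gamma_{X^p}$ imposes no condition on $\chi(\bb{P}_k^1)$ whatsoever, and the subtraction that is supposed to produce $p-1=0$ never gets off the ground. (You may have been misled by a typo in the example's final sentence, where ``$\Gamma_{X^p}$'' should read ``$\Gamma_{X^{p+1}}$''.)

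The overall strategy --- extract two or more ring identities from the explicit examples and combine them to force $p=1$ in $R$ --- is exactly right, but you need tame inputs to pin down $\chi(\bb{P}_k^1)$ first. The paper uses $\Gamma_{X^{p+1}}$ and $\Gamma_{X^{p+2}}$ (both exponents prime to $p$ since $p>2$), giving $(\chi(\bb{P}_k^1)-2)p=0$ and $(\chi(\bb{P}_k^1)-2)(p+1)=0$; subtracting yields $\chi(\bb{P}_k^1)=2$, and then the wild example $\Gamma_{X^p-X}$ gives $(\chi(\bb{P}_k^1)-1)(p-1)=p-1=0$, contradicting $p\neq 1$ in $R$. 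Even more economically, since $p>2$ you could take $m=2$: Fubini for $\Gamma_{X^2}$ alone forces $\chi(\bb{P}_k^1)=2$, after which $\Gamma_{X^p-X}$ finishes the argument. Either repair keeps your two-line algebraic structure intact; the only thing that must change is the choice of sets.
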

\begin{proof}
If Fubini does hold for the sets $\Gamma_{X^{p+1}}$ and $\Gamma_{X^{p+2}}$ of the previous example then it follows that $\chi(\bb{P}_k^1)=2$. But then Fubini does not hold for $\Gamma_{X^p-X}$, unless $p-1=0$ in $R$.
\end{proof}

Now we prove the next main result, namely that Fubini's theorem forces $\chi$, our arbitrary Euler characteristic on the algebra of constructible sets, to be the usual Euler characteristic of a curve:

\begin{theorem}\label{theorem_euler_characteristic_is_usual_one}
Suppose that $\mbox{char}\,k\neq2$ and that Fubini's theorem is true for any non-constant, separable, tame morphism $\phi:C\to\bb{P}_k^1$ from a projective curve to the projective line. Then for any projective curve $C$ we have $\chi(C)=2(1-g)$, where $g$ is the genus of $C$.
\end{theorem}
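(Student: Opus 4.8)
The plan is to first pin down the number $\chi(\bb{P}^1_k)$, and then transfer the answer to an arbitrary curve via a tame covering of the line. For the first step, since $\mbox{char}\,k\neq 2$ the morphism $f\colon\bb{P}^1_k\to\bb{P}^1_k$ induced by $t\mapsto t^2$ is separable and tamely ramified: it is ramified only over $0$ and $\infty$, at each with ramification index $2$, which is prime to $\mbox{char}\,k$. By hypothesis Fubini holds for $f$, so Lemma~\ref{lemma_Fubini_formula_for_curves} gives $\chi(\bb{P}^1_k)=2\chi(\bb{P}^1_k)-2$ and hence $\chi(\bb{P}^1_k)=2$; this is exactly the $m=2$ case of the example preceding Theorem~\ref{theorem_Fubini_fails_in_char_p}.

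Next, let $C$ be any projective curve of genus $g$ (we may assume $C\neq\bb{P}^1_k$, already treated). The crux is the claim that $C$ carries a non-constant, separable, \emph{tame} morphism $\phi\colon C\to\bb{P}^1_k$. Granting it, the theorem is immediate: Fubini holds for $\phi$, so Lemma~\ref{lemma_Fubini_formula_for_curves} combined with $\chi(\bb{P}^1_k)=2$ yields
\[
\chi(C)=2\deg\phi-\sum_{x\in C}(e_x(\phi)-1);
\]
tameness forces $e_x(\phi)=\tilde e_x(\phi)$ for every $x$, so the right-hand side is $2\deg\phi-\sum_{x\in C}(\tilde e_x(\phi)-1)$, which by the Riemann--Hurwitz formula for $\phi$ (its target has genus $0$) equals $2(1-g)$. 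Hence $\chi(C)=2(1-g)$.

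The remaining, and genuinely harder, point is the existence of the tame cover, which I would obtain by a generic projection. Realise $C$ birationally as a plane curve $C_0\subseteq\bb{P}^2_k$, e.g.\ by projecting a projective embedding of $C$, and let $\phi$ be the composite of the normalisation $C\to C_0$ with the projection $C_0\to\bb{P}^1_k$ away from a point $P\notin C_0$. For a general $P$ this map is non-constant and separable (inseparability would force every tangent line of $C_0$ to pass through $P$); and the ramification index of $\phi$ at a point lying over $Q\in C_0$ is, away from the finitely many singular points of $C_0$, the intersection multiplicity of the line $\overline{PQ}$ with $C_0$ at $Q$, which for general $P$ is at most $2$ — the ramification is simple — since $P$ may be chosen off the finitely many inflectional tangents of $C_0$ and the finitely many lines through singular points. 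As $\mbox{char}\,k\neq 2$, ramification index $2$ is tame, so $\phi$ is tame. The delicate issue here — and the main obstacle to a self-contained write-up — is that in characteristic $p$ a plane model may have infinitely many inflection points (its Hessian can vanish identically), so one must either select the birational plane model to avoid this, or argue instead with a general pencil inside a sufficiently positive very ample linear system and verify directly that the generic member defines a separable map with reduced ramification divisor. For the clean statement I would appeal to the general-position results of \cite{Hartshorne}, Chapter~IV, \S3.
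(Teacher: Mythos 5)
Your overall skeleton is exactly the paper's: use $t\mapsto t^2$ (separable and tame since $\mbox{char}\,k\neq 2$) together with Lemma~\ref{lemma_Fubini_formula_for_curves} to get $\chi(\bb{P}_k^1)=2\chi(\bb{P}_k^1)-2$, hence $\chi(\bb{P}_k^1)=2$; then feed a tame cover $\phi:C\to\bb{P}_k^1$ into the same lemma and identify the resulting formula with Riemann--Hurwitz (tameness giving $e_x=\tilde e_x$). All of that is correct and is what the paper does.

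The one genuine gap is the input you yourself flag: the existence of a non-constant, separable, \emph{tame} morphism $C\to\bb{P}_k^1$ in positive characteristic. The generic-projection construction really does fail for some plane models when $\mbox{char}\,k=p>0$: for instance the smooth plane curve $x^{p+1}+y^{p+1}+z^{p+1}=0$ in characteristic $p$ has the property that every point is an inflection point whose tangent line meets the curve there with multiplicity divisible by $p$, so projection from \emph{any} point off the curve is wildly ramified; choosing a ``better'' birational plane model is not a routine fix, and the general-position results of \cite{Hartshorne}, IV.3, to which you defer are essentially characteristic-zero statements (the nodal-model and tangent-line arguments there are proved under that hypothesis), so the citation does not close the gap. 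The paper closes it with a single reference: by \cite[prop.~8.1]{Fulton}, for every sufficiently large $n$ there is a degree-$n$ morphism $\phi:C\to\bb{P}_k^1$ each of whose fibres contains at least $n-1$ points; such a $\phi$ has all $e_x(\phi)\le 2$, which forces it to be separable (an inseparable map of degree $n$ has fibres with at most $n/p<n-1$ points) and, since $\mbox{char}\,k\neq 2$, tame. You should either invoke that result or prove an equivalent existence statement; with it in hand, the rest of your argument goes through verbatim.
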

\begin{proof}
For any integer $m>1$ not divisible by $\mbox{char}\,k$, the morphism $f:\bb{P}_k^1\to\bb{P}_k^1$ induced by $f(t)=t^m$ is separable and tame; therefore we may apply Fubini's theorem to deduce $(\chi(\bb{P}_k^1)-2)(m-1)=0$. Therefore $\chi(\bb{P}_k^1)=2$, which agrees with the desired genus formula.

Now let $C$ be a projective curve over $k$. By a classical result of algebraic geometry \cite[prop 8.1]{Fulton} there is, for any $n$ sufficiently large (depending on the genus $g$ of $C$), a non-constant morphism $\phi:C\to\bb{P}_k^1$ of degree $n$ with the property that any fibre contains at least $n-1$ points. For $n$ not divisible by $\mbox{char}\,k$ such a morphism is separable and tame; therefore we are permitted to apply Fubini's theorem, deducing \[\chi(C)=2\deg\phi-\sum_{x\in C}(e_x(\phi)-1).\]

But this is nothing other than the Riemann-Hurwitz formula for the morphism $\phi$; so we obtain $\chi(C)=2(1-g)$ as claimed.
\end{proof}

This allows us to strengthen the observation that Fubini fails in finite characteristic:

\begin{theorem}
Suppose that $\mbox{char}\,k\neq2$ and that Fubini's theorem is true for any non-constant, separable, tame morphism between projective curves. Then Fubini's theorem holds for a separable morphism between projective curves if and only if the morphism is tame.
\end{theorem}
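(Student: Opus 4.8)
The plan is to reduce the statement to a direct comparison of two formulae that are already available. The ``if'' direction is immediate: a non-constant separable \emph{tame} morphism between projective curves satisfies Fubini by the very hypothesis of the theorem, so nothing needs to be done there. The content is the ``only if'' direction, which I would handle as follows.

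Let $\phi:C_1\to C_2$ be a separable (non-constant) morphism of projective curves for which Fubini holds; the goal is to show $e_x(\phi)=\tilde e_x(\phi)$ for every $x\in C_1$, which is exactly the statement that $\phi$ is tamely ramified everywhere. Since $\mbox{char}\,k\neq2$ and the hypothesis of the present theorem in particular grants Fubini for every non-constant separable tame morphism $C\to\bb{P}_k^1$, Theorem \ref{theorem_euler_characteristic_is_usual_one} applies and yields $\chi(C_i)=2(1-g_i)$ for $i=1,2$, where $g_i$ is the genus of $C_i$. Substituting these values, the Riemann--Hurwitz formula takes the form
\[\chi(C_1)=\chi(C_2)\deg\phi-\sum_{x\in C_1}(\tilde e_x(\phi)-1),\]
whereas Lemma \ref{lemma_Fubini_formula_for_curves} tells us that Fubini for $\phi$ is equivalent to
\[\chi(C_1)=\chi(C_2)\deg\phi-\sum_{x\in C_1}(e_x(\phi)-1).\]
Subtracting the first identity from the second gives $\sum_{x\in C_1}\bigl(\tilde e_x(\phi)-e_x(\phi)\bigr)=0$.

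To conclude I would invoke the basic fact recalled just before Remark \ref{remark_RH_ramification_degree_is_almost_discriminant}: each difference $\tilde e_x(\phi)-e_x(\phi)$ is a non-negative integer, vanishing precisely when $\phi$ is tame at $x$. A sum of non-negative integers that equals zero must have every summand zero, so $\phi$ is tame at each $x\in C_1$, i.e.\ $\phi$ is tame, as required.

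There is no genuine obstacle; the only things to be careful about are (i) noting that the hypothesis of this theorem does imply the hypothesis of Theorem \ref{theorem_euler_characteristic_is_usual_one} (the latter concerns only morphisms onto $\bb{P}_k^1$, a special case), and (ii) writing the Fubini-equivalent identity of Lemma \ref{lemma_Fubini_formula_for_curves} and the Riemann--Hurwitz formula in the same shape before subtracting --- which is precisely what the passage $\chi(C_i)\mapsto 2(1-g_i)$ supplied by Theorem \ref{theorem_euler_characteristic_is_usual_one} accomplishes.
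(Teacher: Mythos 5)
Your proposal is correct and is essentially the paper's own proof: both pin down $\chi(C)=2(1-g)$ via Theorem \ref{theorem_euler_characteristic_is_usual_one}, then compare the Riemann--Hurwitz identity with the Fubini-equivalent identity of Lemma \ref{lemma_Fubini_formula_for_curves} and use $\tilde e_x(\phi)\ge e_x(\phi)$ with equality exactly at tame points. The only cosmetic difference is that the paper argues by contraposition (assume not tame, derive incompatibility) whereas you subtract the two formulae directly; note that both versions implicitly use that the positive integer $\sum_{x}(\tilde e_x(\phi)-e_x(\phi))$ remains nonzero in the value ring $R$.
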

\begin{proof}
The previous result implies that $\chi(C)=2(1-g)$ is the usual Euler characteristic of any projective curve $C$. Suppose that $\phi:C_1\to C_2$ is a separable morphism of projective curves which is not everywhere tame. Then the Riemann-Hurwitz formula tells us that \[\chi(C_1)=\chi(C_2)\deg{\phi}-\sum_{x\in C_1}(\tilde{e}_x(\phi)-1),\] which is incompatible with Fubini's theorem for $\phi$ as $\tilde{e}_x(\phi)\ge e_x(\phi)$ for all $x\in C_1$ with at least one value of $x$ for which we do not have equality.
\end{proof}

\begin{remark}\label{remark_wild_ramification}
More precisely, in the situation of the previous result, we have \[\chi(C_1)-\int_{C_2}\chi(\phi^{-1}(y))\,d\chi(y)=\sum_{x\in C_1}d_x(\phi),\] where $d_x(\phi)$ is defined by $\mathfrak{D}_x(\phi)=e_x(\phi)-1+d_x(\phi)$; here $\mathfrak{D}_x(\phi)$ denotes the different of the extension $\roi_{{C_1},x}/\roi_{{C_2},\phi(x)}$ of discrete valuation rings (see also remark \ref{remark_RH_ramification_degree_is_almost_discriminant}). $d_x(\phi)$ measures the wild ramification at $x$.

An Euler characteristic is typically considered an object of 'tame' mathematics \cite{vandenDries}, and so this formula is remarkable in that it expresses wild information purely in terms of tame.
\end{remark}

\begin{remark}
A two-dimensional local field is a complete discrete valuation field $F$ whose residue field $K$ is a usual local field. Such a field is not locally compact but a theory of integration on such spaces has been developed \cite{Fesenko-analysis-on-arithmetic-schemes}, \cite{Fesenko-analysis-on-loop-spaces}, \cite{Hrushovski-Kazhdan1}, \cite{Hrushovski-Kazhdan2}, \cite{Kim-Lee1}, \cite{Morrow_1}, \cite{Morrow_2}, \cite{Morrow_3}.

In \cite{Morrow_3} the author proved that the characteristic function of \[\Gamma=\{(x,y)\in F:\,(x,y-t^{-1}x^p)\in\cal{O}_F\times\cal{O}_F\}\] fails to satisfy Fubini's theorem; in fact, $\int_F\int_F \Char{\Gamma}(x,y)\,dxdy=0$ and $\int_F\int_F \Char{\Gamma}(x,y)\,dxdy=1$. This is similar phenomenon to what we have just observed for the Euler characteristic $\chi$.

These results suggest interpreting the Riemann-Hurwtiz formula as a modified 'repeated integral', adjusted in a suitable way to ensure that Fubini's theorem holds. Perhaps it is possible to modify the two-dimensional integration theory in a similar way.
\end{remark}

\section{Strong Euler characteristics}

In the previous section, we in fact only considered interchanging the order of integration in morphisms all of whose fibres were finite. This brief section is a study of the possible Euler characteristics which do satisfy this restricted version of Fubini's theorem.

\begin{definition}
Let $\Omega$ be a set with structure $\cal{A}$. An Euler characteristic $\chi$ is said to be {\em strong} if and only if whenever $f:A\to B$ is a definable function between two definable sets such that there exists a positive integer $n$, with $|\chi(f^{-1})(b)|=n$ for all $b\in B$, then $\chi(A)=n\chi(B)$.
\end{definition}

\begin{remark}
A strong Euler characteristic satisfies Fubini's theorem in a very weak sense. For suppose $\chi$ is an Euler characteristic, $A\subseteq \Omega^n$, $B\subseteq\Omega^m$ are definable, and $f:A\to B$ is an $n\mbox{-to-}1$ mapping as in the definition; set $\Gamma=\{(x,y)\in \Omega^n\times \Omega^m:x\in A,\,y\in B,\,f(x)=y\}$. Then Fubini's theorem holds for $\Char{\Gamma}$ if and only if $\chi(A)=n\chi(B)$.
\end{remark}

It is straightforward to establish non-existence in certain cases and uniqueness in others:

\begin{theorem}
Suppose $k$ is an algebraically closed field, of finite characteristic $>2$, with the structure of constructible sets; then no strong Euler characteristic exists.
\end{theorem}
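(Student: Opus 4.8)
The plan is to exploit the fact that a strong Euler characteristic $\chi$ (with values in a ring $R$) automatically satisfies the conclusion of Fubini's theorem for \emph{étale} maps, and then to feed it two wildly ramified self-coverings of $\mathbb{A}^1_k$ whose degrees are such that no nonzero ring can accommodate both resulting identities. First I would pin down $\chi$ on small sets. The squaring map $x\mapsto x^2$ is a definable $2$-to-$1$ self-map of $\mathbb{G}_m=k\setminus\{0\}$ (it is genuinely $2$-to-$1$ because $\mathrm{char}\,k\neq2$), so the strong property gives $\chi(\mathbb{G}_m)=2\chi(\mathbb{G}_m)$, hence $\chi(\mathbb{G}_m)=0$ and $\chi(\mathbb{A}^1_k)=\chi(\mathbb{G}_m)+\chi(\{0\})=1$. (Alternatively, the étale-restriction argument below shows a strong Euler characteristic satisfies Fubini for tame separable coverings, so Theorem \ref{theorem_euler_characteristic_is_usual_one} gives $\chi(\mathbb{P}^1_k)=2$ and hence $\chi(\mathbb{A}^1_k)=1$; but the squaring map is quicker.)

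Next I would consider the polynomial maps $f_1(t)=t^p-t$ and $f_2(t)=t^{2p}-t$, where $p=\mathrm{char}\,k$. Each has derivative identically equal to $-1$, hence is everywhere unramified on $\mathbb{A}^1_k$; the induced morphism $\mathbb{A}^1_k\to\mathbb{A}^1_k$ is therefore finite étale, of degree $p$ and $2p$ respectively, so it is a definable $p$-to-$1$ (resp.\ $2p$-to-$1$) map. Applying the strong property yields $\chi(\mathbb{A}^1_k)=p\,\chi(\mathbb{A}^1_k)$ and $\chi(\mathbb{A}^1_k)=2p\,\chi(\mathbb{A}^1_k)$. Subtracting gives $p\,\chi(\mathbb{A}^1_k)=0$, whence $\chi(\mathbb{A}^1_k)=p\,\chi(\mathbb{A}^1_k)=0$ in $R$; combined with $\chi(\mathbb{A}^1_k)=1$ this forces $1=0$ in $R$, so no strong Euler characteristic exists.

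The one real subtlety — and the step I expect to be the conceptual obstacle — is the choice of the two coverings. A single Artin--Schreier covering such as $f_1=t^p-t$ only yields $(p-1)\chi(\mathbb{A}^1_k)=0$, i.e.\ $p=1$ in $R$, which is not by itself contradictory (for instance $R=\mathbb{Z}/(p-1)$ survives all the relations one can extract from curves of tame or single-wild-point type). The point is that $t^{2p}-t$ is again everywhere étale on $\mathbb{A}^1_k$ but of degree $2p$, giving the \emph{independent} relation $(2p-1)\chi(\mathbb{A}^1_k)=0$, and since $\gcd(p-1,2p-1)=1$ the two relations together annihilate $\chi(\mathbb{A}^1_k)$. (More generally $t^{kp}-t$ is everywhere étale of degree $kp$ for every $k\geq1$, so there is ample freedom.) Finally, one should note that ``no strong Euler characteristic exists'' is to be read as ``none with values in a nonzero ring'', since the zero ring trivially carries one; equivalently, the argument shows that the ideal of the Grothendieck ring $K_0(k)$ generated by the relations $[A]-n[B]$ arising from $n$-to-$1$ definable maps is the whole of $K_0(k)$.
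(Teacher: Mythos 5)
Your proof is correct, but it takes a genuinely different route from the paper's, and in fact it establishes more. The paper's proof is a one-line reduction to Theorem \ref{theorem_Fubini_fails_in_char_p}: the tame coverings $t^{p+1},t^{p+2}$ of $\mathbb{P}^1_k$ force $\chi(\mathbb{P}^1_k)=2$, and then the single Artin--Schreier covering $t^p-t$ yields $p-1=0$ in $R$ --- which is a contradiction only under that theorem's standing hypothesis that $p\neq 1$ in $R$. Read literally, the paper's argument therefore does not exclude, say, a $\mathbb{Z}/(p-1)$-valued strong Euler characteristic. You work instead entirely with everywhere-\'etale self-maps of $\mathbb{A}^1_k$ and $\mathbb{G}_m$ (squaring on $\mathbb{G}_m$ to get $\chi(\mathbb{A}^1_k)=1$, then $t^p-t$ and $t^{2p}-t$, both with derivative $-1$ and hence with all fibres of exactly $p$, resp.\ $2p$, points), and the two wild relations $(p-1)\chi(\mathbb{A}^1_k)=0$ and $(2p-1)\chi(\mathbb{A}^1_k)=0$ have coprime coefficients, so together they give $\chi(\mathbb{A}^1_k)=0$ and hence $1=0$ in $R$ with no hypothesis on $R$ at all. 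Your approach buys a cleaner and strictly stronger conclusion (the theorem as literally stated, modulo the harmless zero-ring caveat you already flag), at the cost of needing the extra observation that $t^{kp}-t$ is everywhere \'etale of degree $kp$; the paper's approach buys brevity by recycling its earlier counterexample, but inherits that counterexample's restriction on $R$. All the individual steps you use (exact fibre counts via separability of $t^{n}-t-c$, $\chi(\mathrm{point})=1$, additivity over $\mathbb{A}^1_k=\mathbb{G}_m\sqcup\{0\}$) check out.
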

\begin{proof}
This is just a restatement of theorem \ref{theorem_Fubini_fails_in_char_p}, where the counterexample did not require $\chi$ to satisfy the full Fubini property, but merely be strong.
\end{proof}

\begin{theorem}
Suppose $k$ is an algebraically closed field, of characteristic zero, with the structure of constructible sets; then at most one strong Euler characteristic exists, and it is $\bb{Z}$ valued.
\end{theorem}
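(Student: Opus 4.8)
The plan is to prove that every strong Euler characteristic $\chi$ is forced to take, on each constructible set, a specific value lying in the prime subring $\bb{Z}\cdot 1_R$ of $R$; uniqueness and $\bb{Z}$-valuedness are then immediate. The starting point is the computation of $\chi$ on affine space. Since $\mbox{char}\,k\neq 2$, the morphism $\mult{k}\to\mult{k}$, $x\mapsto x^2$, is a definable two-to-one map, so strongness gives $\chi(\mult{k})=2\chi(\mult{k})$, hence $\chi(\mult{k})=0$ and $\chi(\bb{A}_k^1)=\chi(\mult{k})+\chi(\{0\})=1$; multiplicativity then gives $\chi(\bb{A}_k^d)=1$ for all $d\ge 0$. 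This is essentially the only place the characteristic-zero hypothesis enters, beyond generic smoothness used below.

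Next I would induct on $d=\dim C$, the claim being that $\chi(C)$ lies in $\bb{Z}\cdot 1_R$ and is independent of the choice of strong $\chi$. The case $d=0$ holds because $\chi$ of a finite set is its cardinality. For the inductive step, stratify $C$ into finitely many irreducible locally closed subvarieties; those of dimension $<d$ are covered by the inductive hypothesis, so it suffices to handle an irreducible variety $V$ of dimension exactly $d$, and, after deleting from it a dense affine open (whose complement has smaller dimension), we may take $V$ affine. By Noether normalisation there is a finite surjective morphism $\pi:V\to\bb{A}_k^d$; as $\mbox{char}\,k=0$ the corresponding function field extension is separable, so $\pi$ is generically \'etale. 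Let $U\subseteq\bb{A}_k^d$ be the locus over which $\pi$ is finite \'etale and $Z=\bb{A}_k^d\setminus U$; then $U$ is open, dense and connected, and $Z$ is closed of dimension $<d$. Over the connected base $U$ the fibre cardinality of $\pi$ is the constant $n=\deg\pi$, so strongness yields \[\chi(\pi^{-1}(U))=n\,\chi(U)=n\bigl(\chi(\bb{A}_k^d)-\chi(Z)\bigr)=n\bigl(1-\chi(Z)\bigr).\] Because $\pi$ is finite, $\pi^{-1}(Z)$ has dimension $<d$, so by induction $\chi(Z)$ and $\chi(\pi^{-1}(Z))$ are forced elements of $\bb{Z}\cdot 1_R$, and therefore $\chi(V)=\chi(\pi^{-1}(U))+\chi(\pi^{-1}(Z))=n\bigl(1-\chi(Z)\bigr)+\chi(\pi^{-1}(Z))$ is forced as well. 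Summing over the strata of $C$ finishes the induction, and hence the theorem.

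The main point to be careful about is the passage "generically \'etale $\Rightarrow$ constant fibre count over a suitable open set": one uses that the \'etale locus of a finite morphism is open, that the complement of its image is an open set $U$ over which $\pi$ is genuinely finite \'etale, and that $U$ --- a nonempty open in the irreducible variety $\bb{A}_k^d$ --- is connected, so that the locally constant degree of the cover $\pi^{-1}(U)\to U$ is a single integer $n$, which is what makes strongness applicable. Granting that, the remainder is routine: the stratification of constructible sets into irreducible locally closed pieces, the reduction to affine varieties, and the dimension bookkeeping in the induction. No geometric input is needed beyond Noether normalisation and the separability of finite field extensions in characteristic zero.
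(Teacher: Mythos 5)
Your proof is correct and follows essentially the same route as the paper's: induction on dimension, a finite morphism from the $d$-dimensional variety onto a standard $d$-dimensional space, generic \'{e}taleness (from separability in characteristic zero) giving a constant fibre count over a nonempty --- hence dense and connected --- open subset of the irreducible base, and strongness applied exactly there, with the lower-dimensional leftovers absorbed by the inductive hypothesis. The only differences are cosmetic: you use Noether normalisation onto $\bb{A}_k^d$ where the paper uses a finite projective morphism onto $\bb{P}_k^d$, you compute $\chi(\bb{A}_k^1)=1$ directly from the two-to-one squaring map on $\mult{k}$ where the paper cites the $t^m$ argument of its earlier theorem on curves, and your formulation in terms of values forced to lie in $\bb{Z}\cdot 1_R$ delivers the $\bb{Z}$-valuedness slightly more directly than the paper's comparison of two Euler characteristics.
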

\begin{proof}
Let $\chi_i$ be strong Euler characteristics, for $i=1,2$. The algebra of constructible subsets of $k^n$ is generated by the irreducible closed subsets, and therefore it is enough to establish $\chi_1(V)=\chi_2(V)$ for any irreducible closed $V\subseteq k^n$; this we do by induction on the dimension $d$ of $V$. Let $V'$ be the closure of $V$ in $\bb{P}_k^n$; then $V'\setminus V$ has dimension strictly less than that of $V$, and so, by the inductive hypothesis, it is enough to establish $\chi_1(V)=\chi_2(V')$.

Let $f:V'\to\bb{P}_k^d$ be a finite projective morphism; this always exists (see e.g. \cite[Lem. 6.4.27]{Liu}). Let $\Sigma\subset V'$ denote the points at which $V'$ is non-singular, or at which $f$ is not \'{e}tale; this is closed in $V'$ by \cite[Prop. 4.2.24, Cor. 4.4.12]{Liu}. Since morphisms of finite type are closed, $U:=\bb{P}_k^d\setminus f(\Sigma)$ is an open subset of $\bb{P}_k^d$, and it is non-empty because it contains the generic point (here it is important to observe that $K(V')/K(\bb{P}_k^1)$ is a {\em separable} extension of fields).

Hence the restriction of $f$ to $f^{-1}(U)$ is a finite \'{e}tale morphism to $\bb{P}_k^1$, i.e. an \'{e}tale cover, of degree $m=|K(V')/K(\bb{P}_k^1)|$; the assumption that each $\chi_i$ is strong implies \[\chi_i(f^{-1}(U))=m\chi_i(U)\] for $i=1,2$. Moreover, $\dim(V'\setminus f^{-1}(U))$ and $\dim(f(\Sigma))$ are both $<d$, and therefore the inductive hypothesis lets us deduce
\begin{align*}
\chi_1(V')&=\chi_1(f^{-1}(U))+\chi_1(V'\setminus f^{-1}(U))\\
	&=m(\chi_1(\bb{P}_k^d)-\chi_1(f(\Sigma)))+\chi_2(V'\setminus f^{-1}(U))\\
	&=m(\chi_1(\bb{P}_k^d)-\chi_2(f(\Sigma)))+\chi_2(V'\setminus f^{-1}(U))\\
	&=m(\chi_1(\bb{P}_k^d)-\chi_2(\bb{P}_k^d))+\chi_2(V').
\end{align*}
It remains only to prove that our two Euler characteristics agree on $\bb{P}_k^d$. Decomposing projective space into a disjoint union of constructible sets $\bb{P}_k^d=\bigsqcup_{i=0}^d\bb{A}_k^i$ and using multiplicativity of each $\chi_i$ on products, we have finally reduced the problem to proving that $\chi_1(\bb{A}_k^1)=\chi_2(\bb{A}_k^1)$.

But the argument of the first paragraph of theorem \ref{theorem_euler_characteristic_is_usual_one}, which is valid for any strong Euler characteristic, establishes that $\chi_i(\bb{A}_k^1)=1$ for $i=1,2$.
\end{proof}

\begin{remark}
If $k=\bb{C}$ then a strong Euler characteristic does exist on the structure of constructible sets, namely the topological Euler characteristic. This follows from the classical result that if $\tilde{X}\to X$ is an $n$-sheeted covering of a CW-complex $X$, then $\chi_\sub{top}(\tilde{X})=n\chi_\sub{top}(X)$.

The Lefschetz principle (i.e. that the first order theory of algebraically closed fields of characteristic zero is complete; see \cite{Cherlin} for a classical discussion of this principle) now implies that a strong Euler characteristic exists for any algebraically closed field of characteristic zero.
\end{remark}

\begin{remark}
The inclusion of this material is inspired by \cite{Krajicek} and \cite{Krajicek_Scanlon}, where strong Euler characteristics (in fact, the definition of 'strong' in these papers is slightly stronger than the definition given here) are discussed from the perspective of model theory. In \cite{Krajicek_Scanlon}, it is proved that a universal strong Euler characteristic $\mbox{Def}(k)\to K_0^\sub{s}(k)$ exists, and so our previous theorem and remark prove that if $k$ is algebraically closed of characteristic zero, then $K_0^\sub{s}(k)=\bb{Z}$.
\end{remark}

\section{Riemann-Hurwitz and Fubini's theorem for surfaces}
Now we generalise the results of the previous section from curves to surfaces. $k$ continues to be an algebraically closed field, and $\chi$ is a fixed $R$-valued Euler characteristic on the structure of constructible sets. In this section, 'surface' means a smooth, two-dimensional, irreducible algebraic variety over $k$, whereas a 'curve' is merely a one-dimensional, reduced, algebraic variety over $k$

If $\phi:S_1\to S_2$ is a finite morphism between projective surfaces of degree $n$, then let $B\subseteq S_2$ be the set of $y\in S_2$ such that $\phi^{-1}(y)$ does not contain $n$ points. Zariski's purity theorem (see e.g. \cite[ex. 8.2.15]{Liu} or \cite{Zariski}) states that $B$ is pure of dimension one; let $B_1,\dots,B_r$ be its irreducible components, and let $n_i$ be the degree of the morphism $\phi|_{\phi^{-1}(B_i)}:\phi^{-1}(B_i)\to B_i$ (note that the degree is well-defined, as the base curve is irreducible, though the covering curve $\phi^{-1}(B_i)$ may be reducible). Using this data we may prove an analogue of lemma \ref{lemma_Fubini_formula_for_curves}:

\begin{theorem}
Let $\phi:S_1\to S_2$ be a finite morphism between projective surfaces, with notation as in the previous paragraph. Then Fubini holds for $\phi$ (in the same sense as the previous section) if and only if the following formula relating $\chi(S_1)$ and $\chi(S_2)$ is satisfied: \[\chi(S_1)=\chi(S_2)\,\deg \phi-\sum_{i=1}^r(n-n_i)\chi(B_i)+\sum_{y\in B}\left(|\phi^{-1}(y)|-n+\sum_{i=1}^r(n-n_i)m_i(y)\right),\] where $m_i(y)$ denotes the number of local branches of $B_i$ at $x$. If $\chi$ is a strong Euler characteristic then this formula holds.
\end{theorem}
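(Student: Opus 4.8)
The plan is to imitate the proof of Lemma~\ref{lemma_Fubini_formula_for_curves}. Since $\phi$ is finite, every fibre $\phi^{-1}(y)$ is finite, so $\chi(\phi^{-1}(y))=\abs{\phi^{-1}(y)}$ and, once the integrand is known to be integrable, ``Fubini holds for $\phi$'' is exactly the assertion $\chi(S_1)=\int_{S_2}\abs{\phi^{-1}(y)}\,d\chi(y)$. The statement then splits into two tasks: (a) evaluate this integral and match it against the displayed formula; and (b) show that if $\chi$ is strong the equation holds automatically, so that the formula follows from~(a).

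Integrability is quickly dealt with. Write $N(y):=\abs{\phi^{-1}(y)}$; then $N$ takes values in $\{1,\dots,\deg\phi\}$ (it is everywhere $\ge1$ because a finite morphism of irreducible projective surfaces is surjective), and each set $\{y:N(y)\ge k\}$ is constructible, being the image under a coordinate projection of the definable set of $k$-tuples of pairwise distinct points of $S_1$ with a common image under $\phi$. Taking differences, $N=\sum_{k}k\,\Char{N^{-1}(k)}$ is a constructible step function, hence integrable, with $\int_{S_2}N\,d\chi=\sum_{k}k\,\chi(N^{-1}(k))$. To evaluate this I would write
\[
N(y)=\deg\phi-\sum_{i=1}^{r}(n-n_i)\,m_i(y)+\eps(y),
\]
which defines $\eps$, and note that $\eps$ vanishes off a finite set: on $S_2\setminus B$ one has $N\equiv\deg\phi$ and all $m_i=0$, so $\eps=0$; and $\eps(y)=0$ whenever $y$ lies on exactly one $B_i$, at a smooth point of $B_i$ over which $\phi|_{\phi^{-1}(B_i)}$ has its generic fibre cardinality $n_i$, since there $m_i(y)=1$ and $N(y)=n_i$. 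The only possible exceptions lie in the finite set $\bigcup_{i\ne j}(B_i\cap B_j)\cup\bigcup_i\mathrm{Sing}(B_i)$, together with the finitely many points over which some $\phi|_{\phi^{-1}(B_i)}$ degenerates.

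Now integrate term by term. One has $\int_{S_2}\deg\phi\,d\chi=\chi(S_2)\deg\phi$, and, since $\eps(y)=\abs{\phi^{-1}(y)}-n+\sum_i(n-n_i)m_i(y)$ is supported on $B$, $\int_{S_2}\eps\,d\chi$ is exactly the displayed finite sum. The remaining term $\int_{S_2}m_i\,d\chi$ is evaluated by the same elementary bookkeeping as in the proof of Lemma~\ref{lemma_Fubini_formula_for_curves}, applied to the normalisation $\nu_i\colon\widetilde{B_i}\to S_2$ of $B_i$: this is an isomorphism onto $B_i$ away from $\mathrm{Sing}(B_i)$, with $\abs{\nu_i^{-1}(y)}=m_i(y)$ throughout, so $\int_{S_2}m_i\,d\chi$ equals $\chi(B_i)$ up to the local corrections $\sum_{y\in\mathrm{Sing}(B_i)}(m_i(y)-1)$. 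Assembling the pieces yields a formula of the displayed shape. The part I expect to be the real work is exactly this assembly: checking, at nodes, cusps and the higher intersection points of the branch curves, that the branch multiplicities $m_i(y)$ carry the correct weights and that the singular-point corrections combine as required (in particular, pinning down whether the singular components of the branch locus enter through $\chi(B_i)$ or through the Euler characteristic of their normalisations). The geometric inputs --- Zariski purity, finiteness of fibres, constructibility of the loci involved --- are comparatively soft.

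Finally, task~(b). Partition $S_2=\bigsqcup_{k\ge1}N^{-1}(k)$ into the finitely many constructible level sets of $N$. For each $k$, the restriction $\phi\colon\phi^{-1}(N^{-1}(k))\to N^{-1}(k)$ is a definable map all of whose fibres have exactly $k$ elements, so the definition of a strong Euler characteristic gives $\chi(\phi^{-1}(N^{-1}(k)))=k\,\chi(N^{-1}(k))$. Summing over $k$ gives $\chi(S_1)=\sum_{k}k\,\chi(N^{-1}(k))=\int_{S_2}N\,d\chi=\int_{S_2}\chi(\phi^{-1}(y))\,d\chi(y)$, so Fubini holds for $\phi$; the displayed formula then follows from task~(a).
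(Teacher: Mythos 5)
Your overall strategy is sound and is genuinely different in organisation from the paper's. The paper normalises not only the branch curve $B$ but also $D=\phi^{-1}(B)$, uses functoriality of normalisation to produce an induced map $\tilde{\phi}\colon\tilde{D}\to\tilde{B}$, and applies the computation of Lemma \ref{lemma_Fubini_formula_for_curves} over each component $\tilde{B}_i$; you avoid touching $\phi^{-1}(B)$ altogether and instead decompose the counting function $N(y)=|\phi^{-1}(y)|$ as $n-\sum_i(n-n_i)m_i(y)+\eps(y)$ and integrate term by term. Your verification that $\eps$ has finite support is correct, the integrability argument via Chevalley's theorem is fine, and your part (b) --- partitioning $S_2$ into the level sets of $N$ and applying the definition of a strong Euler characteristic to each piece --- is exactly the justification of the final sentence of the theorem, which the paper's proof does not spell out at all. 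The one hypothesis both arguments quietly use is that the generic fibre of $\phi$ over $B_i$ really has $n_i$ points, i.e.\ that $\phi^{-1}(B_i)\to B_i$ is generically unramified; this is automatic in characteristic zero but is a genuine caveat in characteristic $p$.

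The point you flagged as ``the real work'' --- whether the singular points of $B_i$ enter through $\chi(B_i)$ or through the Euler characteristic of its normalisation --- is indeed where the subtlety lies, and you should be aware that completing your assembly does \emph{not} reproduce the displayed formula verbatim. You correctly compute $\int_{S_2}m_i\,d\chi=\chi(B_i)+\sum_{y}(m_i(y)-1)=\chi(\tilde{B}_i)$, so your method yields
\[\chi(S_1)=n\chi(S_2)-\sum_{i=1}^r(n-n_i)\chi(\tilde{B}_i)+\sum_{y\in B}\Bigl(|\phi^{-1}(y)|-n+\sum_{i=1}^r(n-n_i)m_i(y)\Bigr),\]
which differs from the displayed formula by $\sum_i(n-n_i)\sum_{y}(m_i(y)-1)$. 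This is not an error on your side: combining the paper's own intermediate identities produces the same result (equivalently, $\chi(B_i)$ paired with the indicator of $y\in B_i$ rather than with the branch count $m_i(y)$), and a direct check on a generic projection $S\to\bb{P}^2$ whose branch curve has $\delta$ nodes and $\kappa$ cusps shows the version as printed overcounts by $\delta$. So your route proves the corrected statement; to match the theorem as printed you would have to make exactly the substitution $\chi(B_i)\mapsto\chi(\tilde{B}_i)$ that you already identified as the crux.
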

\begin{proof}
The normalisation of $B$ is by definition $\pi_B:\tilde{B}=\bigsqcup_{i=1}^r\tilde{B}_i\to B$, where $\pi_i:\tilde{B}_i\to B_i$ is the normalistion of the irreducible curve $B_i$. Write $D=\phi^{-1}(B)$, and let $\pi_D:\tilde{D}\to D$ be its normalisation in the same way as $B$; the functoriality of normalising implies that there is an induced morphism $\tilde{\phi}:\tilde{D}\to\tilde{B}$ such that $\pi_B\tilde{\phi}=\phi|_D\pi_D$. 

Let $Z\subset B$ be a large enough finite set of points such that $Z$ includes all singular points of the curve $B$, $\phi^{-1}(Z)$ includes all singular points of the curve $\phi^{-1}(B)$, and $\tilde{\phi}^{-1}(\pi_B^{-1}(Z))$ includes all points of ramification of $\tilde{\phi}$. Then $\pi_D$ and $\pi_B$ induce isomorphisms $\tilde{D}\setminus \tilde{\phi}^{-1}(\pi_B^{-1}((Z))\cong D\setminus \phi^{-1}(Z)$ and $\tilde{B}\setminus \pi_B^{-1}(Z)\cong B\setminus Z$; therefore
\begin{align*}
\int_{B\setminus Z}|\phi^{-1}(y)|\,d\chi(y)
	&=\int_{\tilde{B}\setminus \pi_B^{-1}(Z)}|\tilde{\phi}^{-1}(y)|\,d\chi(y)\\
	&=\int_{\tilde{B}}|\tilde{\phi}^{-1}(y)|\,d\chi(y)-\int_{\pi_B^{-1}(Z)}|\tilde{\phi}^{-1}(y)|\,d\chi(y)\\
	&=\sum_{i=1}^r\int_{\tilde{B}_i}|\tilde{\phi}^{-1}(y)|\,d\chi(y)-\sum_{y\in\pi_B^{-1}(Z)}|\tilde{\phi}^{-1}(y)|
\end{align*}
Further, as we saw in the proof of lemma \ref{lemma_Fubini_formula_for_curves},
\begin{align*}
\int_{\tilde{B}_i}|\tilde{\phi}^{-1}(y)|\,d\chi(y)
	&=n_i\chi(\tilde{B}_i)+\sum_{y\in\tilde{B}_i\cap\pi_B^{-1}(Z)}(|\tilde{\phi}^{-1}(y)|-n_i).
\end{align*}
Since $\tilde{B}_i\setminus \pi_B^{-1}(Z)\cap\tilde{B}_i\cong B_i\setminus Z\cap B_i$, we have $\chi(\tilde{B}_i)=\chi(B_i)+\sum_{y\in Z\cap B_i}(m_i(y)-1)$; combining the last few identities gives
\begin{align*}
\int_B|\phi^{-1}(y)|\,d\chi(y)
	&=\sum_i n_i\chi(B_i)+\sum_in_i\sum_{y\in B_i\cap Z}m_i(y)-\sum_i|B_i\cap Z|\\
	&\phantom{=}-\sum_i\sum_{y\in\tilde{B}_i\cap\pi_B^{-1}(Z)}n_i+\sum_{y\in Z}|\phi^{-1}(y)|.
\end{align*}

To complete the proof, combine this identity with
\begin{align*}
\int_{S_2}|\phi^{-1}(y)|\,d\chi(y)
	&=n\chi(S_2\setminus B)+\int_B|\phi^{-1}(y)|\,d\chi(y)\\
	&=n\chi(S_2)-n(\sum_i\chi(B_i)-\sum_{y\in Z}(c(y)-1))+\int_B|\phi^{-1}(y)|\,d\chi(y),
\end{align*}
where $c(y)$ denotes the number of irreducible components of $B$ which pass through $y$ (note that $\sum_{y\in Z} c(y)=\sum_i|B_i\cap Z|$).
\end{proof}

\begin{remark}
When $k=\bb{C}$ and $\chi=\chi_\sub{top}$ is the topological Euler characteristic, which we have remarked earlier is a strong Euler characteristic, then the theorem proves that \[\chi_\sub{top}(S_1)=\chi_\sub{top}(S_2)\,\deg \phi-\sum_{i=1}^r(n-n_i)\chi(B_i)+\sum_{y\in B}\left(|\phi^{-1}(y)|-n+\sum_{i=1}^r(n-n_i)m_i(y)\right).\] The Lefschetz principle now implies that the formula remains true if we replace $k$ by any algebraically closed field of characteristic zero, and $\chi_\sub{top}(S_i)$ by the $l$-adic Euler characteristic (=alternating sum of Betti numbers of $l$-adic \'{e}tale cohomology of $S_i$, =degree of the second Chern class of $S_i$).

This generalisation of the Riemann-Hurwitz formula to surfaces is due to B. Iversen \cite{Iversen}, who established it with purely algebraic techniques by studying pencils of curves on the surfaces. Iversen remarks in his paper that a more topological proof should be possible when $k=\bb{C}$, and our approach provides that.
\end{remark}

\begin{remark}
A natural question now to ask is whether an analogue of the theorem holds in higher dimensions. If $X_1\to X_2$ is a finite morphism between $d$-dimensional smooth projective varieties over $k$, then the branch locus will be pure of dimension $d-1$, so one can hope to obtain results by induction on dimension. The difficulty which appears when the branch local has dimension $>1$ is that there is no functorial way to desingularise. It is unclear to the author at present how significant a problem this is.
\end{remark}

\begin{remark}
Another interesting question concerns the situation in characteristic $p$. We noted in remark \ref{remark_wild_ramification} that, for curves, the difference between the Euler characteristic and the integral over the fibres was a measure of the wild ramification. For surfaces, the situation is more complex, since the wild ramification of surfaces is not fully understood. However, assuming that there is no ferocious ramification present (this is when inseparable morphisms between curves appear), I.~Zhukov \cite{Zhukov-3} has successfully generalised Iversen's formula by defining appropriate ramification invariants; this provides an explicit formula for \[\chi(S_1)-\int_{S_2}|\phi^{-1}(y)|\,dy\] in terms of the wild ramification of the cover.
\end{remark}

\section{Ramification of local fields}

The most interesting related problem is the local situation; that is, ramification of local fields. Fix a complete discrete valuation field $F$ with perfect residue field $\res{F}$, and let $F^\sub{al}$ denote its algebraic closure.  Fix a finite Galois extension $L/F$ with Galois group $G$, and define the usual ramification objects as follows:
\begin{align*}
i_{L/F}(\sigma)&=\min\{\nu_F(\sigma(x)-x):x\in\roi_F\},\\
G_a&=\{\sigma\in G:i_{L/F}(\sigma)\ge a+1\}\quad (a\ge -1),\\
\eta_{L/F}(a)&=e_{L/F}^{-1}\int_0^a|G_x|\,dx\quad (a\ge -1)\\
	&=-1+e_{L/F}^{-1}\sum_{\sigma\in G}\min\{i_{L/F}(\sigma),a+1\}.
\end{align*}
One proves that $\eta_{L/F}$ is a strictly increasing, piecewise linear, function $[-1,\infty)\to [-1,\infty)$, and defines the {\em Hasse-Herbrand} function $\psi_{L/F}:[-1,\infty)\to [-1,\infty)$ to be its inverse. Upper ramification on the Galois group is defined by $G^a=G_{\psi_{L/F}(a)}$.

We now explain a geometric interpretation of these formulae. Since $\res{F}$ is perfect, $\roi_L/\roi_F$ is monogeneic; let $\xi$ be a chosen generator, with minimal polynomial $f\in\roi_F[X]$. Extend $\nu_F$ to all of $F^\sub{al}$ to give a $\bb{Q}$-valued valuation; we will write \[\frak{p}_{F^\sub{al}}^a=\{x\in F^\sub{al}:\nu_F(x)\ge a\}\] for any real number $a$ to denote the closed ball of radius $a$. By some rigid geometry, model theory, or explicit calculations, it is known that $f^{-1}(\frak{p}_{F^\sub{al}}^a)$ may be written in a unique way as a disjoint union of closed balls. Let $\pi_0(f^{-1}(\frak{p}_{F^\sub{al}}^a))$ denote this set of balls, and note that $G$ acts on it transitively since each ball contains at least one root of $f$.

\begin{lemma}
For $a\ge -1$, $\sigma\in G$ acts trivially on $\pi_0(f^{-1}(\frak{p}_{F^\sub{al}}^{\eta_{L/F}(a)+1}))$ if and only if $\sigma\in G_a$.
\end{lemma}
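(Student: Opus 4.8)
The plan is to translate the statement about the action of $\sigma$ on the set of balls $\pi_0(f^{-1}(\frak{p}_{F^\sub{al}}^{\eta_{L/F}(a)+1}))$ into a numerical condition involving the valuations $\nu_F(\sigma(\xi')-\xi')$ for conjugates $\xi'$ of $\xi$, and then recognise that condition as the definition of $G_a$. First I would fix the ball $B_0$ containing $\xi$ itself; since $G$ acts transitively on the set of balls (each contains a root of $f$, and $G$ permutes the roots transitively as $L/F$ is Galois), and $\sigma$ preserves the radius, $\sigma$ acts trivially on the whole set of balls if and only if it fixes $B_0$, i.e. if and only if $\sigma(\xi)$ and $\xi$ lie in the same ball of radius $c:=\eta_{L/F}(a)+1$. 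By the description of $f^{-1}(\frak{p}_{F^\sub{al}}^c)$ as a disjoint union of closed balls whose union is exactly the preimage, two roots $\xi,\xi'$ of $f$ lie in the same component ball precisely when $\nu_F(\xi-\xi')\ge$ (the radius of that ball), and I would first record the elementary fact that this radius equals $\max\{\nu_F(\xi-\xi'') : \xi'' \text{ a root of } f,\ \nu_F(\xi-\xi'')\ge \text{something}\}$ — more precisely, that $\sigma(\xi)$ and $\xi$ lie in a common ball of $f^{-1}(\frak{p}^c)$ iff $\nu_F(\sigma(\xi)-\xi)\ge c$, using that the components are genuine balls and that $\nu_F(f(x)) \ge c$ cuts out exactly $\bigcup B_j$.

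So the claim reduces to: $\nu_F(\sigma(\xi)-\xi)\ge \eta_{L/F}(a)+1 \iff \sigma\in G_a$. Now $i_{L/F}(\sigma)=\nu_L(\sigma(\xi)-\xi)$ since $\xi$ generates $\roi_L/\roi_F$ (this is the standard fact that the min over all of $\roi_L$ is attained at a generator), so $\nu_F(\sigma(\xi)-\xi)=e_{L/F}^{-1}\,i_{L/F}(\sigma)$. Hence the condition becomes $i_{L/F}(\sigma)\ge e_{L/F}(\eta_{L/F}(a)+1)$. The core computation is then to show
\[
e_{L/F}\bigl(\eta_{L/F}(a)+1\bigr)=\sum_{\tau\in G}\min\{i_{L/F}(\tau),\,a+1\},
\]
which is exactly the second displayed formula defining $\eta_{L/F}$ in the excerpt, rearranged. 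Combining, $\sigma$ acts trivially iff
\[
e_{L/F}^{-1}i_{L/F}(\sigma)\ \ge\ e_{L/F}^{-1}\sum_{\tau\in G}\min\{i_{L/F}(\tau),a+1\},
\]
so I must check this inequality is equivalent to $i_{L/F}(\sigma)\ge a+1$, i.e. to $\sigma\in G_a$. This is a purely arithmetic lemma about the integers $i_{L/F}(\tau)$: writing $S(a)=\sum_{\tau}\min\{i_{L/F}(\tau),a+1\}$, one shows $i_{L/F}(\sigma)\ge e_{L/F}^{-1}S(a)$ iff $i_{L/F}(\sigma)\ge a+1$. The "if" direction is immediate since every summand of $S(a)$ is $\le a+1$, giving $e_{L/F}^{-1}S(a)\le e_{L/F}^{-1}\cdot e_{L/F}\cdot(a+1)=a+1\le i_{L/F}(\sigma)$ — wait, that needs $|G|=e_{L/F}$, which holds because $\res F$ is perfect so $L/F$ is totally ramified after the (trivial, since perfect and — actually one needs $L/F$ totally ramified; if not, replace $F$ by the inertia field, or note $|G|=e_{L/F}f_{L/F}$ and adjust). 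The "only if" direction uses that if $\sigma\in G_a$, i.e. $i_{L/F}(\sigma)\ge a+1$, then $\min\{i_{L/F}(\sigma),a+1\}=a+1$ is the largest possible value, and conversely elements not in $G_a$ contribute strictly less, so the averaged sum $e_{L/F}^{-1}S(a)$ lies strictly between the contributions, separating the two cases.

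The main obstacle I anticipate is the bookkeeping in the previous paragraph when $L/F$ is not totally ramified: the identity $\eta_{L/F}(a)=e_{L/F}^{-1}\int_0^a|G_x|\,dx$ and the count $|G|$ must be handled with the correct normalisation, and one must be careful that $i_{L/F}(\sigma)$ is still computed by a single generator $\xi$ of $\roi_L/\roi_F$ (true since $\res F$ perfect) and that the extended valuation $\nu_F$ on $F^\sub{al}$ restricts to $e_{L/F}^{-1}\nu_L$ on $L$. A secondary subtlety is justifying cleanly that "$\sigma$ fixes every ball" is equivalent to "$\sigma$ fixes the ball containing $\xi$": this uses transitivity of the $G$-action together with the fact that $\sigma$ conjugate to $\tau^{-1}\sigma\tau$ has the same effect on radii, so fixing one ball forces fixing all — I would phrase this via: $\sigma$ fixes the ball $\tau B_0$ iff $\tau^{-1}\sigma\tau$ fixes $B_0$, and $i_{L/F}(\tau^{-1}\sigma\tau)=i_{L/F}(\sigma)$ is $G$-conjugation invariant, so the condition is the same for every ball. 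With these two points nailed down, the rest is the elementary inequality manipulation above.
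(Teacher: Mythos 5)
There is a genuine gap, and it sits at the heart of the argument: your claim that ``$\sigma(\xi)$ and $\xi$ lie in a common ball of $f^{-1}(\frak{p}^c)$ iff $\nu_F(\sigma(\xi)-\xi)\ge c$'' is false. The set $f^{-1}(\frak{p}_{F^{\sub{al}}}^c)=\{x:\nu_F(f(x))\ge c\}$ is cut out by a condition on $f(x)$, not on $x-\xi$, and since $f(x)=\prod_{\tau\in G}(x-\tau(\xi))$ we have $\nu_F(f(x))=\sum_{\tau\in G}\min\{\nu_F(x-\xi),\,\nu_F(\tau(\xi)-\xi)\}$ for $x$ in the component of $\xi$ (with the usual caveat at coincidences of valuations). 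Hence the radius $r$ of the ball around $\xi$ is the solution of $\sum_{\tau\in G}\min\{r,\nu_F(\tau(\xi)-\xi)\}=c$, not $r=c$. Writing $\nu_F(\tau(\xi)-\xi)=e_{L/F}^{-1}i_{L/F}(\tau)$ and taking $c=\eta_{L/F}(a)+1$, the displayed formula $\eta_{L/F}(a)=-1+e_{L/F}^{-1}\sum_{\tau}\min\{i_{L/F}(\tau),a+1\}$ says precisely that $r=(a+1)/e_{L/F}$; then $\sigma$ fixes the ball of $\xi$ iff $e_{L/F}^{-1}i_{L/F}(\sigma)\ge (a+1)/e_{L/F}$ iff $i_{L/F}(\sigma)\ge a+1$ iff $\sigma\in G_a$. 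This computation of the radius is exactly the point of the lemma and the only place where $\eta_{L/F}$ can enter; by setting the radius equal to $c$ you erase that mechanism and are forced to compensate with the ``arithmetic lemma'' that $i_{L/F}(\sigma)\ge\sum_{\tau}\min\{i_{L/F}(\tau),a+1\}$ iff $i_{L/F}(\sigma)\ge a+1$, which is false: for $L/F$ totally ramified of degree $p$ with a single ramification break $u$ one has $i_{L/F}(\sigma)=u+1$ for $\sigma\neq 1$, so at $a=u$ the right-hand condition holds while the left-hand side would require $u+1\ge p(u+1)$. Your own hesitations in that paragraph (the $|G|$ versus $e_{L/F}$ bookkeeping, and the slide between $i(\sigma)\ge S(a)$ and $i(\sigma)\ge e_{L/F}^{-1}S(a)$) are symptoms of this upstream error rather than independently fixable normalisation issues.

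The parts of your proposal that do work are the reduction steps: the $G$-action on $\pi_0$ is transitive, $i_{L/F}$ is a class function because any conjugate $\tau(\xi)$ is again a generator of $\roi_L$ over $\roi_F$, so fixing one ball is equivalent to fixing all of them, and the problem correctly reduces to deciding when $\sigma(\xi)$ lies in the component of $\xi$. Keep that skeleton, replace the radius claim by the computation above, and the proof goes through. For comparison, the paper itself does not prove the lemma but refers to the sketch in Xiao's overview, which proceeds essentially along the corrected lines just described.
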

\begin{proof}
A nice sketch of this is given in \cite{Xiao_1}.
\end{proof}

So, for any $a\ge -1$, the kernel of the action of $G$ on $\pi_0(f^{-1}(\frak{p}_{F^\sub{al}}^{a+1}))$ is $G^a$. Further, the definition of the Hasse-Herbrand function implies that \[\frac{d\psi_{L/F}}{da}(a)=e_{L/F}^{-1}|G^a|^{-1},\] at least away from the ramification breaks, and therefore that \[\psi_{L/F}(a)=e_{L/F}^{-1}\int_{-1}^a|G^x|^{-1}\,dx-1,\] since both sides are $=-1$ at $a=-1$. But $|G:G^x|=|\pi_0(f^{-1}(\frak{p}_{F^\sub{al}}^{x+1}))|$, and so \[\psi_{L/F}(a)=f_{L/F}^{-1}\int_0^{a+1}|\pi_0(f^{-1}(\frak{p}_{F^\sub{al}}^x))|\,dx-1\tag{$\ast$}\] for all $a\ge -1$.

If we think of ``the number of connected components'' as a measure, then ($\ast$) is a repeated integral taken over certain fibres. This geometric approach to ramification is used by A.~Abbes and T.~Saito \cite{Abbes_Saito_1} \cite{Abbes_Saito_2} to develop ramification theory for complete discrete valuation fields with imperfect residue field, using rigid geometry. L.~Xiao has written a good overview \cite{Xiao_1} of their theory and established integrality of various conductors \cite{Xiao_2} \cite{Xiao_3}. I would not have been able to give the discussion above without the help of G.~Yamashita. Perhaps it is possible to replace the rigid geometry techniques by model theory, using the rich model theoretic structure of algebraically closed valued field, just as we have explored ramification of surfaces using the model theory of algebraically closed fields.

\affiliationone{
	Matthew Morrow\\
	Maths and Physics Building,\\
	University of Nottingham,\\
	University Park,\\
	Nottingham\\
	NG7 2RD\\
	United Kingdom\\
   	\email{matthew.morrow@maths.nottingham.ac.uk}}
\end{document}